\documentclass[12pt]{amsart}

\setlength{\textwidth}{6.3 in}
\setlength{\topmargin} {-.4 in}
\setlength{\evensidemargin}{0 in}
\setlength{\oddsidemargin}{0 in}
\setlength{\footskip}{.3 in}
\setlength{\headheight}{.3 in}
\setlength{\textheight}{8.9 in}
\setlength{\parskip}{.03 in}

\usepackage{newpxtext,amssymb,latexsym,amsmath,amsfonts,amsthm,amscd,graphicx,url,color,hyperref, eulervm}

\usepackage{hyperref}
\hypersetup{
 colorlinks,
 linkcolor={blue!90!black},
 citecolor={red!80!black},
 urlcolor={blue!50!black},
breaklinks=true
}
\usepackage{tabularx}
\usepackage{comment}
\usepackage{amscd}
\usepackage{blkarray}
\usepackage{booktabs}
\usepackage{enumitem}
\usepackage{aliascnt}
\usepackage[initials, lite]{amsrefs}
\usepackage[capitalize,nameinlink]{cleveref}
\usepackage{tikz, tikz-cd}
\usepackage{calligra,mathrsfs}
\usetikzlibrary{matrix}
\usetikzlibrary{arrows,calc}
\usepackage{url}

\allowdisplaybreaks

\theoremstyle{plain}

\newtheorem{thm}{Theorem}[section]
\newtheorem{prop}[thm]{Proposition}
\newtheorem{cor}[thm]{Corollary}
\newtheorem{lemma}[thm]{Lemma}
\newtheorem{conj}[thm]{Conjecture}

\theoremstyle{definition}
\newtheorem{definition}[thm]{Definition}

\newtheorem{remark}[thm]{Remark}
\newtheorem{question}[thm]{Question}
\newtheorem{examples}[thm]{Examples}
\newtheorem{notation}[thm]{Notation}
\newtheorem{convention}[thm]{Convention}
\newtheorem{example}[thm]{Example}
\numberwithin{equation}{section}

\newcommand{\Proj}{\mathrm{Proj}}
\newcommand{\Hilb}{\mathrm{Hilb}}
\newcommand{\Tor}{\mathrm{Tor}}
\newcommand{\HF}{\mathrm{HF}}
\newcommand{\HP}{\mathrm{HP}}

\newcommand{\NN}{\mathbb{N}}
\newcommand{\PP}{\mathbb{P}}
\newcommand{\ZZ}{\mathbb{Z}}

\newcommand{\Lex}{{\mathrm{Lex}}}

\newcommand{\ch}{\mathrm{char}}
\newcommand{\depth}{\mathrm{depth}}

\newcommand{\rank}{\mathrm{rank}}
\newcommand{\lex}{{\mathrm{lex}}}
\newcommand{\ann}{{\mathrm{ann}}}
\renewcommand{\deg}{{\mathrm{deg}}}

\newcommand{\mult}{\mathrm{mult}}

\newcommand{\mm}{\mathfrak{m}}

\newcommand{\bfu}{\mathbf{u}}
\newcommand{\bfv}{\mathbf{v}}
\newcommand{\bfe}{\mathbf{e}}

\newcommand{\ovS}{{\overline{S}}}
\newcommand{\ovR}{{\overline{R}}}

\newcommand{\wS}{{\widetilde{S}}}
\newcommand{\wR}{{\widetilde{R}}}

\newcommand{\wI}{{\widetilde{I}}}
\newcommand{\wJ}{{\widetilde{J}}}
\newcommand{\wL}{{\widetilde{L}}}

\newcommand{\DD}{{\mathbb{D}}}
\newcommand{\EE}{{\mathbb{E}}}
\newcommand{\FF}{{\mathbb{F}}}

\newcommand{\CC}{\mathfrak{C}}

\newcommand{\mI}{{\mathcal{I}}}

\newcommand{\mC}{{\mathcal{C}}}
\newcommand{\mJ}{{\mathcal{J}}}

\makeatletter
\@namedef{subjclassname@2020}{
  \textup{2020} Mathematics Subject Classification}
\makeatother

\bibliographystyle{amsplain}

\title{Syzygies in Hilbert schemes of complete intersections}

\author{Giulio Caviglia \and Alessio Sammartano}
\address[Giulio Caviglia]{Department of Mathematics, Purdue University, 
West Lafayette,
IN, USA}
\email{gcavigli@purdue.edu}
\address[Alessio Sammartano]{Dipartimento di Matematica, Politecnico di Milano, 
 Milano, Italy}
\email{alessio.sammartano@polimi.it}
\subjclass[2020]{Primary: 13D02; Secondary:  13C40,  13F55, 14C05.}
\keywords{Clements--Lindstr\"om ring; Betti numbers; infinite free resolutions; finite subscheme; strongly stable ideal; Eisenbud-Green-Harris Conjecture; Lex-Plus-Powers Conjecture.
} 
\thanks{The work of the first named author was partially supported by a grant from the Simons Foundation (41000748, G.C.).
The second named author is a member of the Gruppo Nazionale per le Strutture Algebriche, Geometriche e le loro Applicazioni (GNSAGA)  of INdAM, and was partially supported by 
 PRIN 2020355B8Y “Squarefree Gr\"oner degenerations, special varieties and related topics”. }

\begin{document}

\begin{abstract}
Let $ e_1, \ldots,  e_{c}  $ be positive integers
and let $ Y \subseteq \PP^n$ be the monomial complete intersection defined by the vanishing of $x_1^{e_1}, \ldots, x_{c}^{e_{c}}$.
In this paper,
 we study sharp upper  bounds on the number of equations and syzygies of subschemes parametrized by the Hilbert scheme of points $\Hilb^d(Y)$, 
and discuss applications to the Hilbert scheme of points $\Hilb^d(X)$ of arbitrary complete intersections $X \subseteq \PP^n$.
\end{abstract}

\maketitle

\section*{Introduction}

In this paper,
 we investigate the extremal behavior of free resolutions 
of finite subschemes of complete intersections $X \subseteq \PP^n$.
Our motivating question is the following.
Let $\bfe = (e_1, \ldots, e_c)$ be a degree sequence and $d$ a positive integer: 
are there uniform bounds on the syzygies of $Z\subseteq X$, where $X\subseteq \PP^n$ is a complete intersection of degrees $\bfe$ and $Z\subseteq X$ a finite subscheme of length $d$?

In order to address this problem, we study Hilbert schemes of points of Clements-Lindstr\"om schemes $Y \subseteq \PP^n$, 
 defined by the vanishing of pure powers $x_1^{e_1}, \ldots, x_{c}^{e_{c}}$.
Our  main result, Theorem \ref{TheoremExtremalFiniteResolution}, states that a distinguished monomial ideal 
$\CC(d) $ attains the largest possible number of $i$-th syzygies for a subscheme in $\Hilb^d(Y)$, for every homological degree $i$. 
There are advantages in  considering   Clements-Lindstr\"om schemes $Y$ for various degree sequences $\bfe$, as opposed to just considering $\PP^n$.
First, 
by taking the degree sequence into account, and restricting thus to a smaller Hilbert scheme,
one obtains sharper numerical bounds on Betti numbers.
A similar point of view is adopted e.g. in  \cite{EiGrHa93}, where 
 bounds on the  number of points in intersections of quadric hypersurfaces are improved using the data of the degree sequence,
 or  in the study of balanced Cohen-Macaulay simplicial complexes in \cite{JKV18}.
More importantly, 
our bounds extend conjecturally to arbitrary complete intersections in $\PP^n$.
In fact, we show that, under the validity of the  Lex Plus Powers Conjecture, the distinguished ideal $\CC(d)$ yields uniform bounds for the syzygies of  subschemes $Z\in \Hilb^d(X)$ for \emph{all} complete intersections $X\subseteq \PP^n$ of degrees $\bfe$,
thus giving a complete answer to our motivating problem.

When restricting to $c=0$, that is, for $Y =\PP^n$, we recover the main result of \cite{Va94}. 
In fact, a major motivation for this work was  the desire to extend classical results on $\Hilb^d(\PP^n)$  \cite{Be81,BrIa78,ElRoVa91,Va94}
 to  the general setting of complete intersections.

We  apply our methods also  to infinite free resolutions over complete intersections, motivated by the recent progress in this area \cite{EiPe16,EiPe20}.
We conjecture that the extremal behavior of Theorem \ref{TheoremExtremalFiniteResolution} holds for  infinite free resolutions, and prove this conjecture for quadratic Clements-Lindstr\"om rings in characteristic zero in Theorem \ref{TheoremIfninite}.
We also discuss the analogous problem for  deviations and Poincar\'e series, 
solving it in the case of $\Hilb^d(\PP^n)$ in Corollaries \ref{CorollaryDeviations} and \ref{CorollaryPoincare}.

\subsection*{Organization}
In Section \ref{SectionPreliminaries}, 
we set up the notation, introduce Clements-Lindstr\"om rings and the relevant classes of monomial ideals.
Section \ref{SectionDecomposition} discusses a decomposition of monomial ideals in Clements-Lindstr\"om rings, which plays an important role in the recursive study of syzygies.
In Section \ref{SectionSyzygies},
 we prove our main result on extremal syzgyies in $\Hilb^d(Y)$, Theorem \ref{TheoremExtremalFiniteResolution}. 
It relies on the study of the decomposition of a special monomial ideal $\CC(d)$, which we carry out in detail in Proposition \ref{PropositionMainPropertiesCd},
and in particular on the extremality of $\CC(d)$ with respect to certain ``hypersurface sections''.
In Section \ref{SectionInfinite},
 we study infinite free resolutions.
Our main result is Theorem \ref{TheoremIfninite},
where we combine the tools of Section \ref{SectionSyzygies} with a construction of \cite{ArAvHe00,EiPoYu03,GaHiPe02}
to show that the ideal $\CC(d)$ also attains the maximum Betti numbers of the infinite free resolution over quadratic Clements-Lindstr\"om rings, if the ground field has characteristic zero.
Finally, in Section \ref{SectionApplications},
 we conclude the paper with some applications to arbitrary complete intersections, combining our main results with the known cases of the
Eisenbud-Green-Harris and Lex-Plus-Powers conjectures.

\section{Clements-Lindstr\"om rings}\label{SectionPreliminaries}

Let $\NN$ denote the set of nonnegative  integers,
and let $\Bbbk$ denote an arbitrary field. 
All  rings considered in this work are standard graded $\Bbbk$-algebras, and all ideals and modules are graded;
these attributes are often assumed implicitly and  omitted.

Let  $V$ be a $\ZZ$-graded $\Bbbk$-vector space.
The $j$-th graded component is denoted by $[V]_j$.
The  \emph{Hilbert function}
$\HF(V) $ is  $\HF(V,j) = \dim_\Bbbk[V]_j$.
The \emph{Hilbert polynomial} $\HP(V)$, when it exists, satisfies $\HP(V,j) = \HF(V,j)$  for all $j \gg 0$.

Let $A$ be a ring and $I\subseteq A$ an ideal.
The  maximal ideal of   $A$ is denoted by $\mm_A$.
An ideal $I\subseteq A$ is \emph{saturated} if $I : \mm_A = I$, 
equivalently, if $\depth(A/I)>0$.
The saturation $I : \mm_A^\infty = \cup_{t \geq 0} I : \mm_A^t$ of $I \subseteq A $
 is a saturated ideal with 
$\HP(I : \mm_A^\infty) =\HP(I)$.
If $M$ is a finite  $A$-module,
the integers 
$$
\beta_{i,j}^A(M) = \dim_\Bbbk[\Tor^A_i(M,\Bbbk)]_j
\quad \text{and} \quad  
\beta_{i}^A(M) = \dim_\Bbbk\Tor^A_i(M,\Bbbk)$$
are  the {\it graded Betti numbers} and the {\it (total) Betti numbers} of $M$, respectively.

Let $I \subseteq A$ be an ideal.
The \emph{multiplicity} of $A/I$, 
defined as  normalized leading coefficient of $\HP(A/I)$, is denoted by $\mult(I)$.
This slight abuse of notation should not generate confusion, since the multiplicity of $I$ as $A$-module often does not carry interesting information.
When $\dim(A/I)=1$, 
as in the setting of this paper, 
$\HP(A/I)$ is a constant polynomial, equal to 
$\mult(I)$.
When $\dim(A/I)=\depth(A/I)=1$,
then  $\mult(I) = \dim_\Bbbk\frac{A}{I+(z)}$ 
where $z\in[A]_1$ is a non-zerodivisor on $A/I$.

Given the projective scheme $X= \Proj A$ and   $d\in\NN$,
the {\it Hilbert scheme of points}, denoted by $\Hilb^d(X)$, is the projective scheme parametrizing  finite subschemes  $Z\subseteq X$ of length $d$, 
equivalently, with $\mult(I_Z) = d$.
As it is common in the literature, 
we    identify a closed subscheme $Z \subseteq X$ with its saturated  ideal $I_Z \subseteq A$ and with the point on the Hilbert scheme parametrizing it.
Moreover, we adopt the following:

\begin{convention}
If $I\subseteq A$ is an ideal, the expression  ``$I\in \Hilb^d(\Proj A)$'' means that $I$ is saturated,
$\dim(A/I) = 1$ and $\mathrm{mult}(I)=d$.
\end{convention}

We now introduce the rings that are central to this work.

\begin{convention}\label{ConventionInfinity}
We will  use  $\NN\cup \{\infty\}$ as index set and as range for exponents. 
We adopt standard conventions on $\infty$, e.g. $\ell < \infty $  and  $\infty-\ell = \infty$ for all $\ell \in \NN$.
If $r$ is an element in a ring, then  $r^\infty := 0$.
If $e = \infty$, the expression ``$\ell < e$''   means ``$\ell\in \NN$''.
\end{convention}

\begin{definition}
A {\it Clements-Lindstr\"om ring} is a ring of the form 
$$
A = \frac{\Bbbk [x_1, \ldots, x_{m}]}{\big(x_1^{e_1}, \ldots, x_m^{e_{m}}\big)}
$$
for some  $e_1 \leq e_2 \leq \cdots \leq e_{m}$ with $e_i \in \NN\cup \{\infty\}$.
\end{definition}

For the remainder of this section,   $A$ denotes a Clements-Lindstr\"om ring.
We emphasize that $x_i^{\infty}=0$;
thus, $A$ is a polynomial ring if $e_1 = \infty$, whereas it is Artinian if $e_m < \infty$.

\begin{remark}
Suppose that $\Proj A \ne \emptyset$, equivalently, $e_m = \infty$.
Then, $\Hilb^d(\Proj A) \ne 0$ if and only if 
 either $\dim A > 1$, that is, $e_{m-1} = \infty $, or 
$\dim A =1$ and the multiplicity of $A$ is at least $d$,
that is, 
$ e_{m-1} < \infty$ and $d \leq e_1 e_2 \cdots e_{m-1}$.
\end{remark}

An ideal $I \subseteq A$ is monomial if it is the image of a monomial ideal of $\Bbbk [x_1, \ldots, x_{m}]$. 
We denote the lexicographic monomial order in  $A$ by $<_\lex$.
A monomial  ideal $I \subseteq A$ is {\it lex} if every $[I]_j$ is a generated by an initial segment with respect to $<_\lex$,
equivalently, if $I$ is the image of a lex ideal of $\Bbbk [x_1, \ldots, x_{m}]$.
The saturation of a lex ideal is again lex.
A theorem of Clements and Lindstr\"om, which generalizes the classical results of Macaulay and Kruskal-Katona,
 states that lex ideals classify Hilbert functions in $A$:

\begin{prop}[\cite{ClLi69}]\label{PropositionCL}
Let $A$ be a Clements-Lindstr\"om ring and $I\subseteq A$ an ideal.
There exists a unique lex ideal $L \subseteq A$ such that $\HF(L) = \HF(I)$.
\end{prop}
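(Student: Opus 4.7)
I would treat uniqueness and existence separately. Uniqueness is immediate: a lex ideal $L\subseteq A$ is completely determined in each degree $j$ by the single integer $\HF(L,j)$, since $[L]_j$ must be the $\Bbbk$-span of the initial segment of $\HF(L,j)$ monomials of $[A]_j$ under $<_\lex$. Hence two lex ideals with the same Hilbert function agree in every degree and therefore coincide.

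For existence, the strategy is to define $L$ degree by degree: let $[L]_j$ be the span of the first $\HF(I,j)$ monomials of $[A]_j$ in $<_\lex$. Then by construction $\HF(L) = \HF(I)$, and it remains to verify that $L := \bigoplus_j [L]_j$ is an ideal of $A$, i.e.\ that $A_1 \cdot [L]_j \subseteq [L]_{j+1}$ for every $j$. Since $A_1 \cdot [L]_j$ is already the $\Bbbk$-span of a lex-initial set of monomials of $[A]_{j+1}$, this containment reduces to the numerical inequality $\dim_\Bbbk A_1 \cdot [L]_j \leq \HF(I,j+1)$. Because $\HF(I,j+1) \geq \dim_\Bbbk A_1 \cdot [I]_j$, it suffices to prove the key combinatorial statement: for every monomial subspace $V \subseteq [A]_j$, if $L_j(V)$ denotes the initial lex segment of dimension $\dim_\Bbbk V$, then
\[
\dim_\Bbbk A_1 \cdot L_j(V) \;\leq\; \dim_\Bbbk A_1 \cdot V.
\]
This is the generalized Macaulay/Kruskal--Katona inequality that lies at the heart of the Clements--Lindstr\"om theorem.

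I would prove this inequality by a \emph{compression} argument, using a double induction on the number of variables $m$ and on $j$. For each variable $x_i$, one replaces the monomial set underlying $V$ by the lex-optimal choice on the fibers of the projection that forgets $x_i$, subject to the restriction $x_i^{d_i} = 0$; a direct counting argument shows such a compression cannot increase $\dim_\Bbbk A_1 \cdot V$. Iterating compressions across all variables produces a fully compressed set, which one then shows must already be an initial lex segment in $<_\lex$.

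The main obstacle is executing the compression step in the presence of the truncations $x_i^{d_i} = 0$. Unlike Macaulay's polynomial-ring case, each compression must be designed so that the resulting monomial set still avoids the forbidden exponents, and the bound on $\dim_\Bbbk A_1 \cdot V$ must be reverified in the truncated setting. It is precisely the monotonicity hypothesis $d_1 \leq d_2 \leq \cdots \leq d_m$ (allowed to be $\infty$ via the conventions in Convention \ref{ConventionInfinity}) that guarantees the compressions can be performed consistently, and managing this delicate interaction between truncation and compression is the technical heart of the original argument of Clements and Lindstr\"om.
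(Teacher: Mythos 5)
The paper does not prove this proposition at all: it is quoted verbatim from Clements and Lindstr\"om \cite{ClLi69} as a known classical theorem (generalizing Macaulay and Kruskal--Katona), so there is no internal argument to compare yours against. Your uniqueness argument is complete and correct, and your reduction of existence to the inequality $\dim_\Bbbk A_1\cdot L_j(V)\le \dim_\Bbbk A_1\cdot V$ for lex segments is the right reduction. But the proposal then stops exactly where the theorem begins: the compression argument in the truncated setting \emph{is} the Clements--Lindstr\"om theorem, and you describe it only at the level of "one replaces the monomial set by the lex-optimal choice on fibers and a direct counting argument shows the compression cannot increase $\dim_\Bbbk A_1\cdot V$," while yourself conceding that managing the interaction between compression and the truncations $x_i^{d_i}=0$ is "the technical heart of the original argument." As written, the key inequality is asserted, not proved, so this cannot stand as a proof; it is an accurate roadmap to the literature.

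Two smaller points. First, you apply the combinatorial inequality with $V=[I]_j$, but $I$ is an arbitrary (graded) ideal, so $[I]_j$ need not be spanned by monomials; you must first reduce to the monomial case, e.g.\ by lifting $I$ to the polynomial ring $\Bbbk[x_1,\ldots,x_m]$, replacing the preimage by an initial ideal (which still contains the pure powers $x_i^{d_i}$ and has the same Hilbert function), and descending back to $A$. Second, the step "$A_1\cdot[L]_j$ is already the span of a lex-initial set of monomials of $[A]_{j+1}$" is true but is itself a small lemma in the truncated ring (one must check that multiplying a lex segment by all variables, then deleting the monomials killed by the relations $x_i^{d_i}=0$, again yields an initial segment); it deserves a proof rather than an assertion, since it is precisely where the hypothesis $d_1\le\cdots\le d_m$ first enters.
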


If $\mathcal{H}$ is the Hilbert function of some ideal of $A$, we denote by $\Lex(\mathcal{H},A)$  the  lex ideal $L\subseteq A $ with $\HF(L) = \mathcal{H}$, and, 
if $I \subseteq A$, we define  $\Lex(I):=\Lex(\HF(I), A)$. 

A monomial ideal $I \subseteq A$ is {\it strongly stable} if we have 
$\frac{x_k \bfu}{x_h}\in I$ whenever  $\bfu \in I$ is a  nonzero monomial,  $x_h $ divides $\bfu$, and $k <h$.
It suffices to check this condition for the  generators $\bfu$ of $I$.
A strongly stable ideal $I \subseteq A$ is saturated if and only if the last variable
$x_{m}$ is a non-zerodivisor on $A/I$;
when $\dim A >0$, this is equivalent to $x_{m}$ not dividing any monomial  generator of $I$.
The saturation of a strongly stable ideal is again strongly stable.

A monomial ideal $I \subseteq A $ is  {\it almost lex} if  the last variable $x_{m}$ is a non-zerodivisor on $A/I$  and
 $({I+(x_{m})})/{(x_{m})}$ is a lex ideal of  the Clements-Lindstr\"om ring ${A}/{(x_{m})} 
\cong {\Bbbk [x_1, \ldots, x_{m-1}]}/{(x_1^{e_1}, \ldots, x_{m-1}^{e_{m-1}})}$.
Thus, almost lex ideals are saturated.
Observe that a lex ideal is not almost lex in general, since it may not be saturated.
Both lex ideals and almost lex ideals are strongly stable.

\begin{examples}\label{ExMonomialIdeals}
Let  $e_1=2, e_2 = 3, e_3=e_4 = \infty$.
The  associated Clements-Lindstr\"om ring is $A = \frac{\Bbbk[x_1, x_2, x_3, x_4]}{(x_1^2, x_2^3)}$.
We consider the following ideals:
\begin{itemize}
\item $I = \big( x_1 x_2,\, x_2^2,x_1x_3^2,x_2x_3^2,x_3^4\big) \in \Hilb^8(\Proj A)$ 
is strongly stable, but it is neither lex nor almost lex, as $x_1x_3 >_{\lex} x_2^2$.
\item 
$J = \big( x_1 x_2,\, x_1x_3,\, x_2^2,\, x_2x_3,\, x_3^6\big)\in \Hilb^8(\Proj A)$  is almost lex, but not lex, as $x_1x_4 >_{\lex} x_2^2$.
\item $K = \big(x_1x_2,\, x_1x_3,\, x_1x_4,\, x_2^2,\, x_2x_3^2,\, x_2x_3x_4^4,\, x_2x_4^6,\, x_3^8\big) = \Lex(J)$ is lex, but not almost lex, as $K:x_4 \ne K$.
Its saturation  $L = \big(x_1,\,x_2,\, x_3^8\big) \in \Hilb^8(\Proj A)$ is lex and almost lex.
\item $C = \big(x_1x_2,\, x_1x_3^2,\, x_2x_3^2,\, x^2x_3,\, x_3^3\big) \in \Hilb^8(\Proj A)$  is almost lex. 
This is an example of the  ideals that will play an important role in Section \ref{SectionSyzygies}.
\end{itemize}
\end{examples}

If $\Hilb^d(\Proj A)\ne \emptyset$,
then there is exactly one lex ideal in  $\Hilb^d(\Proj A)$.
We emphasize that  the lex ideal of a given Hilbert function and the (saturated) lex ideal of a given multiplicity  are different concepts.
The notation $\Lex(I)$ is reserved for the lex ideal with the same Hilbert function as $I$.
We remark that there are algorithms to compute  all strongly stable  or almost lex ideals of $\Hilb^d(\PP^n)$ \cite{AlLe18,CiLeMaRo11,MoNa14},
and these algorithms can be extended to the more general setting of Clements-Lindstr\"om schemes $\Proj A$.

\section{Decomposition of monomial ideals}\label{SectionDecomposition}

We introduce a recursive decomposition of 
  ideals in Clements-Lindstr\"om rings.
This decomposition is particularly effective for strongly stable and almost lex ideals,
and it  will play a fundamental role in our study of syzygies in the subsequent sections.

\begin{notation}\label{NotationRings}
For the rest of the paper, we fix the following rings:
\begin{equation*}
\begin{aligned}
 S & =  \Bbbk[x_1, x_2,\ldots, x_{n}, x_{n+1}],  \qquad &R & =  S / \big(x_1^{e_1}, \ldots, x_n^{e_n}\big),\\  
\ovS &= \Bbbk[x_1, x_2,\ldots, x_{n-1}, x_{n+1}], \qquad & \ovR & =  \ovS / \big(x_1^{e_1}, \ldots, x_{n-1}^{e_{n-1}}\big),\\
\wS &= \Bbbk[x_1, x_2,\ldots, x_{n-1}, x_n], \qquad &\wR &=  {\wS}/{\big(x_1^{e_1}, \ldots, x_n^{e_n}\big)},
\end{aligned}
\end{equation*}
where  $2\leq e_1 \leq e_2 \leq \cdots \leq e_{n}\leq \infty$.
That is, we set  $e_{n+1} = \infty$, 
and $x_{n+1}^{e_{n+1}}=0$  will be omitted. 
We use $\overline{I}$ and $\tilde{I}$ to denote the image of an ideal $I\subseteq S$, respectively, $I \subseteq R$, 
in the factor rings $\ovS$ and $\wS$,  respectively, $\ovR$ and $\wR$.
\end{notation}

The ring $\wR$ is an algebra retract of $R$, since  $R = \wR[x_{n+1}]$,
and thus it may be regarded both as a subring and as a factor ring of $R$; 
both points of view will be  useful in this paper.
This fact, together with the short exact sequence
$
0 \to R/I \xrightarrow{x_{n+1}} R/I \to \wR/\wI \to 0,
$
 induces a tight relation between  ideals of $R$ and $\wR$, 
and we summarize the main formulas in the next remark.

\begin{remark}\label{RemarkModuloXNplus1}
Let $I\subseteq R$ be an ideal  such that $I:x_{n+1}=I$.
For all $d\in \ZZ$
we have  $\HF(\wI, d) = \HF(I,d)-\HF(I,d-1)$.
Moreover, for all $i,j\in \NN$ we have
\begin{align*}
\beta_{i,j}^\wS(\wR/\wI) &= \beta_{i,j}^S(R/I),
&\beta_{i,j}^S(\wR/\wI) &= \beta_{i,j}^S(R/I) + \beta_{i-1,j-1}^S(R/I),\\
\beta_{i,j}^\wR(\wR/\wI) &= \beta_{i,j}^R(R/I),
&\beta_{i,j}^R(\wR/\wI) &= \beta_{i,j}^R(R/I) + \beta_{i-1,j-1}^R(R/I).
\end{align*}
If $I$ is  strongly stable, then so is $\wI$.
Conversely,   the extension $ K R \subseteq R$ of a strongly stable $K\subseteq \wR$ is a saturated strongly stable ideal
whose image in $\wR$ is $K$.
\end{remark}

\begin{prop}\label{PropositionFiniteLengthRank}
Let $I\subseteq J \subseteq R$ be monomial ideals 
such that $I:x_{n+1}=I, J:x_{n+1}=J$, and
$\dim(R/I), \dim(R/J)\leq 1$.
The quotient $J/I$ is a finite free module over $\Bbbk[x_{n+1}]$ via restriction of scalars,
with
$
\rank_{\Bbbk[x_{n+1}]} \left({J}/{I} \right) =\dim_\Bbbk \big({\wJ}/{\wI}\big)= \mult(I)-\mult(J).
$
\end{prop}

\begin{proof}
Denoting $M = J/I$ and $\widetilde{M} = \wJ/\wI$, we have
 $M \cong \widetilde{M} \otimes_\wR \wR[x_{n+1}] \cong \widetilde{M} \otimes_\Bbbk \Bbbk[x_{n+1}]$.
 This implies  the first statement  and 
 $\rank_{\Bbbk[x_{n+1}]} \left({J}/{I} \right) = \dim_\Bbbk \big({\wJ}/{\wI}\big)$.
For the other equality, we have
 $\rank_{\Bbbk[x_{n+1}]}(M)= \HP(M) = \HP(R/I) -\HP(R/J) = \mult(I)-\mult(J)$.
\end{proof}

For a monomial ideal  $I \subseteq R$, 
there exist uniquely determined monomial ideals $I_\ell \subseteq \ovR$ such that
the following decomposition of $\ovR$-modules holds
\begin{equation}\label{EqDecomposition}
I = \bigoplus_{\ell=0}^{e_{n}-1} I_\ell x_{n}^\ell.
\end{equation}
The set of components $\{I_\ell\}$ is finite if $e_n < \infty$, infinite otherwise.
Throughout the paper, the notation $I_\ell$ will always refer to this decomposition; 
it should not be confused with graded components,  denoted instead by $[I]_j$.

In the next proposition, 
we list the basic properties of the decomposition \eqref{EqDecomposition}.

\begin{prop}\label{PropositionElementaryPropertiesDecomposition}
Let $R$ be a Clements-Lindstr\"om ring
and $I \subseteq R $ a monomial ideal such that $I:x_{n+1}=I$ and $\dim(R/I)=1$.
\begin{itemize}
\item[(1)] The sequence $\{ I_\ell\}$ is a non-decreasing chain of ideals of $\ovR$.
\item[(2)] If $e_n = \infty $, 
then $I_\ell = \ovR$ for $\ell \gg 0$.
\item[(3)] $I_\ell$ is saturated with $\dim(\ovR/I_\ell)= 1$ for all $\ell= 0,\ldots, e_n-1$. 
\item[(4)] $\mult(I) = \sum_{\ell=0}^{e_n-1} \mult(I_\ell)$.
\item[(5)] $I$ is strongly stable if and only if $I_\ell$ is strongly stable for all $\ell= 0,\ldots, e_n-1$ and
$(x_1, \ldots, x_{n-1}) I_\ell \subseteq I_{\ell-1}$ for all $\ell= 1,\ldots, e_n-1$. 
\item[(6)]
The quotient
 $I_{\ell}/I_{\ell-1}$ is a  free $\Bbbk[x_{n+1}]$-module with rank  $\mult(I_{\ell-1})-\mult(I_{\ell}) $
 for all $\ell= 1,\ldots, e_n-1$. 
\end{itemize}
\end{prop}

\begin{proof}
Item (1) follows from \eqref{EqDecomposition}, 
since $I$ is closed under multiplication by $x_n$.
Since $\dim(R/I)=1$ and $I : x_{n+1} = I$,
we have $\sqrt{I}= (x_1, \ldots, x_n)$, thus,
if $e_n =\infty$, we have $x_n^\ell \in I$ and $I_\ell = \ovR$ for $\ell \gg 0$, proving (2).
Observe that each monomial generator of $I_\ell$ divides a monomial generator of $I$;
hence, the generators of $I_\ell$ are coprime with $x_{n+1}$, and so each $I_\ell$ is saturated.
We have $\sqrt{I_\ell}=(x_1, \ldots, x_{n-1})$,
 since $\sqrt{I}= (x_1, \ldots, x_n)$, 
and this concludes the proof of (3).
Item (4) follows from \eqref{EqDecomposition}  and item (3),
since $\mult(I)$ is the asymptotic value of $\HF(R/I)$.
Item (5) follows by definition of strongly stable ideals.
Finally, (6) follows directly from Proposition \ref{PropositionFiniteLengthRank}.
\end{proof}

\begin{example}
Let $R = {\Bbbk[x_1,x_2, x_3, x_4]}/{\big(x_1^2,x_2^3\big)}$,
so 
 $\ovR = {\Bbbk[x_1,x_2, x_4]}/{\big(x_1^2,x_2^3\big)}$.
Consider the saturated strongly stable ideal $I = \big( x_1 x_2,\, x_2^2,x_1x_3^2,x_2x_3^2,x_3^4\big)\subseteq R$
of Examples \ref{ExMonomialIdeals}.
The components of $I$ are the $\ovR$-ideals
$$
I_0 = I_1 = \big(x_1x_2,x_2^2\big), \qquad I_2= I_3 = \big( x_1, x_2\big), \qquad I_\ell = \ovR  \quad\text{for } \ell \geq 4.
$$
We have $\mult(I) =  8$,
and the sequence $\{\mult(I_\ell)\}$ is $\{3,3,1,1,0,0,\ldots\}$.

\end{example}

\section{Maximal syzygies}\label{SectionSyzygies}

We begin this section by studying a special almost lex ideal in $R$, which plays a
 central role in the extremality of syzygies in $\Hilb^d(\Proj R)$.

\begin{definition}\label{DefCd}
Let $R$ be a Clements-Lindstr\"om ring and  $d \in \NN$
with $\Hilb^d(\Proj R) \ne 0$.
We let  $\CC(d,R)$ or $\CC(d)$ denote the  unique almost lex ideal $C \in \Hilb^d(\Proj R)$ such that 
$$(x_1,\ldots,x_n)^{p+1}\subseteq C \subseteq (x_1,\ldots,x_n)^p \qquad \text{for some }p \in \NN.
$$
If $I\in  \Hilb^d(\Proj R)$, we define $\CC(I) := \CC(d, R)$.
\end{definition}

The ideal $\CC(d,R)$ is generated by $(x_1, \ldots, x_n)^{p+1}$
and by an initial $<_\lex$-segment of the vector space $[(x_1, \ldots, x_n)^p]_p$,
such that $\mult(\CC(d,R))=d$.
It is clear that such $\CC(d,R)$ is unique for every $d\in \NN$, and exists as long as $\Hilb^d(\Proj R) \ne 0$.
In Examples \ref{ExMonomialIdeals}, we have $C=\CC(8,R)$.

The next proposition highlights the key extremal features of the ideal $\CC(d,R)$.

\begin{prop}\label{PropositionMainPropertiesCd}
Let $R$ be a Clements-Lindstr\"om ring and $d \in \NN$.
\begin{enumerate}
\item[(C1)\label{C1}] $ \CC(d,R)$ is almost lex.

\item[(C2)\label{C2}]  Every component $ \CC(d,R)_\ell \subseteq \ovR$ is equal to $\CC(d_\ell,\ovR)$ for some   $d_\ell\in\NN$.

\item[(C3)\label{C3}] If $d_1 < d_2 $, then   $\CC(d_2,R) \subseteq \CC(d_1,R)$.

\item[(C4)\label{C4}]  If  $I\in \Hilb^d(\Proj R)$  is strongly stable,  then,  for every $\rho < e_n$, we have
$$
\sum_{\ell = 0}^\rho \mult\big(I_\ell\big) \leq  \sum_{\ell = 0}^\rho  \mult\big(\CC(d,R)_\ell\big).
$$
\item[(C5)\label{C5}]  If  $I\in \Hilb^d(\Proj R)$  is   strongly stable, then  
$$\mult\big((x_1, \ldots, x_n)I\big) \leq
\mult\big((x_1, \ldots, x_n)\CC(d,R)\big).$$
\end{enumerate}
\end{prop}
\begin{proof}
We prove the proposition by induction on $n$.
The case  $n=0$ is trivial, since $(0)$ and $R$ are the only saturated ideals of $R$, so we
assume $n>0$.
Properties \hyperref[C1]{(C1)},
 \hyperref[C2]{(C2)} and \hyperref[C3]{(C3)} follow immediately by Definition \ref{DefCd}.

We  prove \hyperref[C4]{(C4)} by induction on $d$. 
The case $d=0$ is trivial, so let $d>0$.
Assume by contradiction there is a strongly stable  $I\in \Hilb^d(\Proj R)$ violating  \hyperref[C4]{(C4)}.
Define 
$$
J = \bigoplus_{\ell=0}^{e_n-1}\CC(I_\ell)x_n^\ell \subseteq R.
$$
We claim that $J$ is a saturated strongly stable ideal of $R$.
We have $I_\ell \subseteq I_{\ell+1}$ and $\mult(I_\ell) \geq \mult(I_{\ell+1})$ for every $\ell$,
hence $\CC(I_\ell) \subseteq \CC(I_{\ell+1})$ by \hyperref[C3]{(C3)}, and this implies that  $J$ is an ideal of $R$.
Moreover, $J$ is saturated, since $J:x_{n+1} = J$.
To show that $J$ is strongly stable, we use Proposition \ref{PropositionElementaryPropertiesDecomposition} (5).
Each component $J_\ell = \CC(I_\ell)$ is almost lex by \hyperref[C1]{(C1)}, and, in particular,
strongly stable.
It remains to show that $(x_1, \ldots, x_{n-1}) \CC(I_\ell) \subseteq \CC(I_{\ell-1})$
for $1 \leq \ell < e_n$.
It follows by Definition \ref{DefCd} that $ (x_1, \ldots, x_{n-1}) \CC(I_\ell) =\CC(d',\ovR)$ for some $d' \in \mathbb{N}$,
thus, by \hyperref[C3]{(C3)},
it suffices to show that $\mult((x_1, \ldots, x_{n-1}) \CC(I_\ell)) \geq \mult(\CC(I_{\ell-1}))$.
We have $(x_1, \ldots, x_{n-1})I_\ell \subseteq I_{\ell-1}$,
 since $I$ is strongly stable.
Combining with \hyperref[C5]{(C5)}, we get
$$
 \mult\big((x_1, \ldots, x_{n-1}) \CC(I_\ell)\big) \geq \mult\big((x_1, \ldots, x_{n-1}) I_\ell\big) \geq \mult(I_{\ell-1}) =\mult\big(\CC(I_{\ell-1})\big),
$$
yielding the desired inequality and completing the proof of the claim.

To summarize, there exists a strongly stable $J\in \Hilb^d(\Proj R)$ violating  \hyperref[C4]{(C4)} and such that $J_\ell = \CC(J_\ell)\subseteq \ovR$ for every $\ell$.
Let $\ell_1$ denote the least $\rho$ for which \hyperref[C4]{(C4)} fails for $J$.
Then,
 $\mult(J_\ell) = \mult(\CC(d,R)_\ell)$ for $\ell<\ell_1 $
and 
$\mult(J_{\ell_1}) > \mult(\CC(d,R)_{\ell_1})$.
Since $\mult(J) = \mult(\CC(d,R))$, by Proposition \ref{PropositionElementaryPropertiesDecomposition}
(4) there is some $\ell_2 > \ell_1$ such that 
$\mult(J_{\ell_2}) < \mult(\CC(d,R)_{\ell_2})$.
By \hyperref[C3]{(C3)}, 
it  follows that 
$J_{\ell_1} \subsetneq \CC(d,R)_{\ell_1}$
and 
$J_{\ell_2}\supsetneq\CC(d,R)_{\ell_2}$.

Let $p \in \NN$ be the integer such that $(x_1,\ldots,x_n)^{p+1}\subseteq \CC(d,R) \subsetneq (x_1,\ldots,x_n)^p$,
then $(x_1,\ldots,x_{n-1})^{p+1-\ell}\subseteq \CC(d,R)_\ell \subseteq (x_1,\ldots,x_{n-1})^{p-\ell}$ for every $\ell$.
We have 
$$(x_1, \ldots, x_{n-1})^{p+1-\ell_1} \subseteq J_{\ell_1} \subsetneq \CC(d,R)_{\ell_1}\subseteq (x_1, \ldots, x_{n-1})^{p-\ell_1}, $$
where the first inclusion holds since 
since $(x_1, \ldots, x_{n-1})^{p+1-\ell_2} \subseteq \CC(d,R)_{\ell_2} \subseteq J_{\ell_2}$,
and $(x_1, \ldots, x_{n-1})^{\ell_2-\ell_1} J_{\ell_2} \subseteq J_{\ell_1}$  by Proposition \ref{PropositionElementaryPropertiesDecomposition} (5).
Thus,  there is a monomial  generator $\bfu$ of $\CC(d,R)_{\ell_1}$ such that $\bfu \notin J_{\ell_1}$ and $\deg(\bfu) = p-\ell_1$.
Furthermore, 
we have 
$$(x_1, \ldots, x_{n-1})^{p+1-\ell_2} \subseteq 
\CC(d,R)_{\ell_2}
\subsetneq J_{\ell_2} \subseteq (x_1, \ldots, x_{n-1})^{p-\ell_2} $$
since 
if $J_{\ell_2} \not\subseteq (x_1, \ldots, x_{n-1})^{p-\ell_2} $
then  
$(x_1, \ldots, x_{n-1})^{\ell_2-\ell_1}J_{\ell_2} \not\subseteq (x_1, \ldots, x_{n-1})^{p-\ell_1} $,
so, by Proposition \ref{PropositionElementaryPropertiesDecomposition} (5), $J_{\ell_1} \not\subseteq (x_1, \ldots, x_{n-1})^{p-\ell_1} $, contradiction.
Thus, there is a  a monomial generator $\bfv$ of $J_{\ell_2}$ such that $\bfv \notin\CC(d,R)_{\ell_2}$ and
 $\deg(\bfv)= p-\ell_2$.
We have $\bfu x_n^{\ell_1}\in \CC(d,R)$, $\bfv x_n^{\ell_2}\notin \CC(d,R)$,
and both monomials have degree $p$, so necessarily 
$\bfu x_n^{\ell_1}>_\lex\bfv x_n^{\ell_2}$.
This implies 
$\bfu x_n^{\ell_1}\geq _\lex\bfv x_{n-1}^{\ell_2-\ell_1}x_n^{\ell_1}$ 
and, hence,
$\bfu \geq _\lex\bfv x_{n-1}^{\ell_2-\ell_1}$.
Since $\bfu \notin J_{\ell_1}$ and $J_{\ell_1}$ is almost lex, 
we see that $\bfv x_{n-1}^{\ell_2-\ell_1}\notin J_{\ell_1}$.
This is a contradiction, since $\bfv \in J_{\ell_2}$ and 
$(x_1, \ldots, x_{n-1})^{\ell_2-\ell_1}J_{\ell_2}\subseteq J_{\ell_1}$.
The proof of \hyperref[C4]{(C4)} is concluded.

In order to prove \hyperref[C5]{(C5)},
we begin by observing that 
 $(x_1, \ldots, x_{n}) I $ has the decomposition
$$
(x_1, \ldots, x_{n}) I = (x_1, \ldots, x_{n-1}) I_0 \oplus \bigoplus_{\ell = 1}^{e_n-1} 
\big(I_{\ell-1} +  (x_1, \ldots, x_{n-1}) I_\ell \big)
x_{n}^{\ell}.
$$
However, we have 
$ (x_1, \ldots, x_{n-1}) I_\ell \subseteq I_{\ell-1} $
by Proposition \ref{PropositionElementaryPropertiesDecomposition} (5),
so we may rewrite
$$
(x_1, \ldots, x_{n}) I = (x_1, \ldots, x_{n-1}) I_0 \oplus \bigoplus_{\ell = 1}^{e_n-1} 
I_{\ell-1} x_{n}^{\ell}
$$
and, by Proposition \ref{PropositionElementaryPropertiesDecomposition} (4),  this implies that 
\begin{equation}
\label{EqC51}
\mult\big( (x_1, \ldots, x_{n}) I  \big) = \mult\big( (x_1, \ldots, x_{n-1}) I_0  \big) + 
\sum_{\ell = 0}^{e_n-2} \mult(I_{\ell}).
\end{equation}
By the same argument, we have
\begin{equation}
\label{EqC52}
\mult\big( (x_1, \ldots, x_{n}) \CC(d,R)  \big) = \mult\big( (x_1, \ldots, x_{n-1}) \CC(d,R)_0  \big) + 
\sum_{\ell = 0}^{e_n-2} \mult(\CC(d,R)_{\ell}).
\end{equation}
Using \hyperref[C4]{(C4)} with $\rho = 0$, we get $\mult(I_0) \leq \mult(\CC(d,R)_0)$, 
hence,
 $\CC(d,R)_0 \subseteq \CC(I_0)$ by \hyperref[C3]{(C3)}.
It follows that 
$(x_1, \ldots, x_{n-1})\CC(d,R)_0 \subseteq (x_1, \ldots, x_{n-1})\CC(I_0)$
and
\begin{equation*}
\mult\big((x_1, \ldots, x_{n-1})\CC(d,R)_0\big) \geq \mult\big( (x_1, \ldots, x_{n-1})\CC(I_0)\big)
\geq 
\mult\big( (x_1, \ldots, x_{n-1})I_0\big),
\end{equation*}
where the last inequality follows by applying \hyperref[C5]{(C5)} to $I_0$.
On the other hand, 
using \hyperref[C4]{(C4)} with $\rho = e_n-2$,
 we also see that 
$\sum_{\ell = 0}^{e_n-2} \mult(I_{\ell})
\leq
\sum_{\ell = 0}^{e_n-2} \mult(\CC(d,R)_{\ell})$.
Comparing \eqref{EqC51} and \eqref{EqC52},
we have proved \hyperref[C5]{(C5)}.
\end{proof}

\begin{remark}
Proposition \ref{PropositionMainPropertiesCd} captures the essential properties needed to obtain  sharp upper  bounds for the syzygies.
Moreover, the assignment 
$
d \mapsto \CC(d,R)\in\Hilb^d(\Proj R)
$
is uniquely characterized by the  properties of Proposition \ref{PropositionMainPropertiesCd},
as it  follows  by induction on $n$ using  \hyperref[C2]{(C2)} and \hyperref[C4]{(C4)}.
 One may thus give a recursive construction of  $\CC(d,R)$
based on these axioms.
This less explicit but effective approach
might be the basis for extending the methods and results of this paper to other classes of rings $R$ or other Hilbert schemes.
\end{remark}

The most important property  of $\CC(d,R)$ is  \hyperref[C4]{(C4)}.
As the following equivalent formulation shows, 
it is closely related to  similar inequalities about ``hypersurface sections'',  see for instance
 \cite[Lemma 3.3]{CaSa18},
\cite[Theorem 2.2]{Ga99}, or the main theorem in 
\cite{HePo98}.

\begin{cor}\label{CorollaryHyperplaneSections}
 Let  $J\in \Hilb^d(\Proj R)$  be  strongly stable.
 For every $0 \leq h < e_n$, we have
$
\mult\big(J + (x_n^h)\big) \leq \mult\big(\CC(d) + (x_n^h)\big).
$
\end{cor}

We now turn our focus to  the study of syzygies of ideals $I \in \Hilb^d(\Proj R)$.
The  results of \cite{MeMu11} allow us to perform an important  reduction to strongly stable ideals.

\begin{lemma}\label{LemmaAlmostLexMerminMurai}
For every $I\in\Hilb^d(\Proj R)$ there exists a strongly stable
 $J\in\Hilb^d(\Proj R)$ with $\beta_{i}^S(R/I) \leq \beta_{i}^S(R/J)$ for all $i\geq 0$.
\end{lemma}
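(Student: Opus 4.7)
The plan is to reduce to a hyperplane section, invoke the bound of \cite{MeMu11} in one fewer variable, and lift back. Since $I$ is saturated, $\mathfrak{m}_R \notin \mathrm{Ass}(R/I)$. Every associated prime of $R/I$ contains each $x_j$ with $d_j < \infty$ (they are nilpotent), so no associated prime can contain all the \emph{free} variables $\{x_i : d_i = \infty\}$ without coinciding with $\mathfrak{m}_R$. Prime avoidance therefore produces a generic $\Bbbk$-linear combination of free variables which is a non-zerodivisor on $R/I$. Such a change of variables fixes the defining pure-power ideal of $R$ and is an automorphism of the graded ring $S$, so it preserves both $\HF(R/I)$ and all of the $\beta_{i,j}^S(R/I)$. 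After this reduction we may assume $x_{n+1}$ itself is a non-zerodivisor on $R/I$.

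Form $\wI = (I + (x_{n+1}))/(x_{n+1}) \subseteq \wR$. Proposition \ref{PropositionCL} produces the unique lex ideal $\wL \subseteq \wR$ with $\HF(\wL) = \HF(\wI)$, and the main result of \cite{MeMu11} applied to $\wI$ in the Clements--Lindstr\"om ring $\wR$ yields $\beta_{i,j}^{\wS}(\wR/\wL) \geq \beta_{i,j}^{\wS}(\wR/\wI)$ for all $i,j$. Now define $J := \wL R \subseteq R$, extending $\wL$ along $R = \wR[x_{n+1}]$. By construction $J$ is monomial, $x_{n+1}$ is a non-zerodivisor on $R/J$, and its image in $\wR$ is the lex ideal $\wL$; hence $J$ is almost lex.

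It remains to compare invariants. Applying the identities of Remark \ref{RemarkModuloXNplus1} to both $I$ and $J$, and using $\wJ = \wL$, the equality
\[
\HF(R/J,d) - \HF(R/J,d-1) = \HF(\wR/\wL,d) = \HF(\wR/\wI,d) = \HF(R/I,d) - \HF(R/I,d-1)
\]
together with $\HF(R/I,0) = 1 = \HF(R/J,0)$ gives $\HF(J) = \HF(I)$, so $J \in \Hilb^{p(\zeta)}(\Proj R)$. Remark \ref{RemarkModuloXNplus1} applied twice also yields
\[
\beta_{i,j}^S(R/J) = \beta_{i,j}^{\wS}(\wR/\wL) \geq \beta_{i,j}^{\wS}(\wR/\wI) = \beta_{i,j}^S(R/I),
\]
as required. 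The one delicate point is the preparatory step arranging $x_{n+1}$ to be a non-zerodivisor; once that is granted, everything else is a direct packaging of the Mermin--Murai bound with the hyperplane-section formulas of Remark \ref{RemarkModuloXNplus1}.
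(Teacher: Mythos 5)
Your proposal is correct and follows essentially the same route as the paper: pass to a general linear non-zerodivisor, apply the Mermin--Murai theorem to $\wI$ in $\wR$, and lift back via Remark \ref{RemarkModuloXNplus1}. Your extra care in choosing the linear form among the free variables (so the coordinate change fixes the pure powers defining $R$) makes explicit a point the paper's one-line "up to a change of coordinates" leaves implicit.
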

\begin{proof}
Since $I\subseteq R$ is saturated, there exists $z\in [R]_1$ that is a non-zerodivisor on $R/I$.
Up to a change of coordinates, we may assume  $z=x_{n+1}$.
By \cite[Proposition 8.7]{MeMu11},
there exists a strongly stable ideal 
$K\subseteq \wR$ with $\HF(K) = \HF(\tilde{I})$
and $\beta_{i,j}^{\tilde{S}}(\tilde{R}/\tilde{I}) \leq \beta_{i,j}^{\tilde{S}}(\tilde{R}/K)$ for all $i,j$.
The conclusion follows from Remark \ref{RemarkModuloXNplus1} considering the extension $J = KR$.
\end{proof}

In the next lemma,
 we consider the natural $\ZZ^{n+1}$-grading on $R$.

\begin{lemma}\label{LemmaUpperBoundBettiFreeModule}
Let $M$ be a finite $\ZZ^{n+1}$-graded $R$-module that is a finite free $\Bbbk[x_{n+1}]$-module of  rank $r$ via restriction of scalars.
For every $i\in \NN$, we have
\begin{itemize}
\item[(i)]
 $\beta_i^S(M) \leq r\cdot \beta_i^S(\Bbbk[x_{n+1}])$  and $\beta_i^R(M) \leq r\cdot \beta_i^R(\Bbbk[x_{n+1}])$;
\item[(ii)]
$\beta_i^S(M) = r\cdot \beta_i^S(\Bbbk[x_{n+1}])$
and
$\beta_i^R(M) = r\cdot \beta_i^R(\Bbbk[x_{n+1}])$
if $\ann_R(M) = (x_1, \ldots, x_n)$.
\end{itemize}
\end{lemma}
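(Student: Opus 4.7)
The plan is to construct a $\ZZ^{n+1}$-graded filtration of $M$ by $R$-submodules whose successive quotients are multigraded shifts of $\Bbbk[x_{n+1}]$, with exactly $c$ shifts in total. Part (i) will then follow from the long exact sequence of $\Tor$, while part (ii) will follow from the observation that the filtration degenerates to a direct-sum decomposition when $(x_1,\ldots,x_n)$ annihilates $M$.

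First I would fix a $\ZZ^{n+1}$-graded $\Bbbk[x_{n+1}]$-basis $e_1,\ldots,e_c$ of $M$, of multidegrees $\alpha_1,\ldots,\alpha_c \in \ZZ^{n+1}$, so that $M = \bigoplus_{j=1}^c \Bbbk[x_{n+1}]\,e_j$ as $\ZZ^{n+1}$-graded $\Bbbk[x_{n+1}]$-modules. For $r \in \ZZ$, set
$$
F_r \;=\; \bigoplus_{j:\ (\alpha_j)_1 + \cdots + (\alpha_j)_n \,\geq\, r} \Bbbk[x_{n+1}]\,e_j \;\subseteq\; M,
$$
i.e.\ the $\Bbbk$-span of all homogeneous elements of $M$ whose first $n$ multidegree coordinates sum to at least $r$. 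Since multiplication by $x_i$ for $i\leq n$ increases this sum by one while multiplication by $x_{n+1}$ preserves it, each $F_r$ is a $\ZZ^{n+1}$-graded $S$-submodule (hence $R$-submodule) of $M$. As $M$ is finitely generated, the filtration $\cdots\supseteq F_r \supseteq F_{r+1}\supseteq\cdots$ is finite. Each quotient $F_r/F_{r+1}$ is annihilated by $(x_1,\ldots,x_n)$ and is $\Bbbk[x_{n+1}]$-free of rank $c_r := \#\{j : (\alpha_j)_1+\cdots+(\alpha_j)_n = r\}$, so
$$
F_r/F_{r+1} \;\cong\; \bigoplus_{j :\,(\alpha_j)_1+\cdots+(\alpha_j)_n = r} \Bbbk[x_{n+1}](-\alpha_j)
$$
as $S$-modules (with $x_1,\ldots,x_n$ acting trivially) and as $R$-modules, with $\sum_r c_r = c$.

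For part (i), each short exact sequence $0 \to F_{r+1} \to F_r \to F_r/F_{r+1} \to 0$ and the long exact sequence of $\Tor^S_\bullet(-,\Bbbk)$ yield the subadditivity $\beta_i^S(F_r) \leq \beta_i^S(F_{r+1}) + \beta_i^S(F_r/F_{r+1})$, and likewise for $\Tor^R_\bullet$. A descending induction on $r$, together with the shift-invariance and direct-sum additivity of total Betti numbers, then produces $\beta_i^S(M) \leq c\cdot \beta_i^S(\Bbbk[x_{n+1}])$ and $\beta_i^R(M) \leq c\cdot \beta_i^R(\Bbbk[x_{n+1}])$. For part (ii), the hypothesis $\ann_R(M) = (x_1,\ldots,x_n)$ makes the $R$-action factor through $R/(x_1,\ldots,x_n) = \Bbbk[x_{n+1}]$, upgrading the $\Bbbk[x_{n+1}]$-free decomposition of $M$ to an $R$-module isomorphism $M \cong \bigoplus_{j=1}^c \Bbbk[x_{n+1}](-\alpha_j)$, and the desired equalities follow from additivity of total Betti numbers on direct sums.

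The only mild subtlety is choosing the right filtration invariant: one needs a $\ZZ$-valued function on $\ZZ^{n+1}$ that is strictly increased by each of $x_1,\ldots,x_n$ and preserved by $x_{n+1}$. The sum of the first $n$ multidegree coordinates is the natural choice, after which the $R$-submodule structure of each $F_r$ and the description of each $F_r/F_{r+1}$ as a direct sum of shifted $\Bbbk[x_{n+1}]$'s are both immediate from the $\ZZ^{n+1}$-graded $\Bbbk[x_{n+1}]$-freeness of $M$.
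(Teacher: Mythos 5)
Your proof is correct and follows essentially the same strategy as the paper's: isolate layers of $M$ that are annihilated by $(x_1,\ldots,x_n)$, hence are direct sums of multigraded shifts of $\Bbbk[x_{n+1}]$, and then apply subadditivity of Betti numbers along the resulting short exact sequences (with additivity giving the equalities in (ii)). The only difference is organizational: the paper inducts on the rank $c$ via the single submodule $(x_1,\ldots,x_n)M$ and its quotient, while you package the same idea into one explicit finite filtration by the sum of the first $n$ multidegree coordinates of a homogeneous $\Bbbk[x_{n+1}]$-basis.
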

\begin{proof}
We prove (ii) first.
Let $m_1, \ldots, m_s$ be minimal $\ZZ^{n+1}$-graded $R$-module generators of $M$.
The assumptions imply the isomorphisms of $R$-modules
$M \cong Rm_1 \oplus \cdots\oplus Rm_s $
and $Rm_h \cong \Bbbk[x_{n+1}]$ for every $h$,
therefore, 
$s = r$ and the formulas for the Betti numbers follow.
To prove (i), we may assume $r>1$.
Let $M' = (x_1, \ldots, x_n) M$ and $M'' = M/M'$.
Both $M'$ and $M''$ are finite $\ZZ^{n+1}$-graded $R$-modules.
As $\Bbbk[x_{n+1}]$-modules via restriction of scalars,
 $M'$ is free  of rank less than $r$,
whereas $M''$ is also free, by multidegree reasons, and it satisfies (ii).
The  conclusions  follow, by induction on $r$, from the  exact sequence $0\rightarrow M' \rightarrow M \rightarrow M'' \rightarrow 0$.
\end{proof}

We are ready to present the main result.

\begin{thm}\label{TheoremExtremalFiniteResolution}
Let $S = \Bbbk[x_1, \ldots, x_{n+1}]$ be a polynomial ring  and $R= S/(x_1^{e_1}, \ldots, x_n^{e_n})$  a Clements-Lindstr\"om ring, where $2 \leq e_1 \leq \cdots \leq e_n \leq \infty$.
For each $d\in \NN$, we have
$$
\beta_{i}^S\big(R/I\big) \leq \beta_{i}^S\big(R/\CC(d)\big) 
$$
for all $I\in\Hilb^d(\Proj R)$ and  all $i \geq 0$.
\end{thm}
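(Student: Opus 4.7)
The plan is to proceed by induction on $n$, the number of bound variables of $R$; the base case $n=0$ is trivial. For the inductive step, Lemma \ref{LemmaAlmostLexMerminMurai} reduces the problem to the case that $I$ is almost lex, hence strongly stable and saturated; by axiom \hyperref[A1]{(A1)} the expansive ideal $E=\Exp(p(\zeta))$ is likewise strongly stable and saturated. Both admit the decomposition \eqref{EqDecomposition} with components in $\ovR$, and by axiom \hyperref[A2]{(A2)} every $E_\ell$ is itself expansive, so the inductive hypothesis is available in $\ovR$.

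The central tool is the short exact sequence of $S$-modules
$$
0 \to (x_1,\ldots,x_n)\,I \to I \to I/(x_1,\ldots,x_n)\,I \to 0.
$$
A direct computation, using that the minimal generators of a strongly stable saturated ideal do not involve $x_{n+1}$ together with Proposition \ref{PropositionElementaryPropertiesDecomposition}(3)-(6), shows that $I/(x_1,\ldots,x_n)\,I$ decomposes as a direct sum of the pieces $I_0/(x_1,\ldots,x_{n-1})I_0$ and $I_\ell/I_{\ell-1}$ for $\ell\geq 1$, each of which is a finite free $\Bbbk[x_{n+1}]$-module. The total rank equals $c(I):=\HP(I)-\HP\bigl((x_1,\ldots,x_n)\,I\bigr)$, and the $R$-annihilator of the quotient is exactly $(x_1,\ldots,x_n)$. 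Lemma \ref{LemmaUpperBoundBettiFreeModule}(ii) then yields
$$
\beta_i^S\bigl(I/(x_1,\ldots,x_n)\,I\bigr) = c(I)\binom{n}{i},
$$
and the long exact Tor sequence gives the upper bound $\beta_i^S(I) \leq \beta_i^S\bigl((x_1,\ldots,x_n)\,I\bigr) + c(I)\binom{n}{i}$.

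For the expansive ideal $E$, axiom \hyperref[A4]{(A4)} guarantees that $(x_1,\ldots,x_n)\,E$ is again expansive, while axiom \hyperref[A7]{(A7)} yields $c(E)\geq c(I)$. The next step is to upgrade the preceding inequality to an equality when $I=E$, i.e., to show that the connecting homomorphism in the long Tor sequence vanishes. The argument uses axiom \hyperref[A6]{(A6)} together with the inductive hypothesis applied componentwise to the expansive pieces $E_\ell\subseteq\ovR$, in order to pin down the multidegrees appearing in the minimal free resolution of $E$ and check that the connecting map is zero in each multidegree. With this equality in hand, axiom \hyperref[A3]{(A3)} provides the inclusion $\Exp\bigl(\HP((x_1,\ldots,x_n)\,I)\bigr)\subseteq(x_1,\ldots,x_n)\,E$, and a secondary induction on the strictly descending chain of Hilbert polynomials $p\succ p-c(I)\succ\cdots$ (well-founded because $c(\cdot)\geq 1$ for any nonzero proper strongly stable ideal, and the relevant constant terms are bounded) gives
$$
\beta_i^S\bigl((x_1,\ldots,x_n)\,I\bigr) \leq \beta_i^S\bigl((x_1,\ldots,x_n)\,E\bigr) + (c(E)-c(I))\binom{n}{i}.
$$
Combining the three estimates then produces $\beta_i^S(I)\leq\beta_i^S(E)$, as desired.

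I expect the main obstacle to be the equality step for $E$: rigorously verifying the splitting of the long Tor sequence for the expansive ideal is the heart of the extremality statement, and reducing it to the inductive hypothesis requires a careful multigraded analysis that draws simultaneously on the axioms \hyperref[A2]{(A2)}, \hyperref[A4]{(A4)}, and \hyperref[A6]{(A6)}, together with the componentwise inductive hypothesis over $\ovR$. Organizing the interaction between the primary induction on $n$ and the secondary induction on the Hilbert polynomial, so that both cooperate cleanly, will be the technical heart of the proof.
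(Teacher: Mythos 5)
Your overall scaffolding (induction on $n$, reduction to almost lex ideals via Lemma \ref{LemmaAlmostLexMerminMurai}, use of Lemma \ref{LemmaUpperBoundBettiFreeModule} and the axioms) matches the paper's, but the central mechanism you propose does not work, for two independent reasons.

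First, the ``equality step'' is false. You need
$\beta_i^S(E) = \beta_i^S\bigl((x_1,\ldots,x_n)E\bigr) + c(E)\binom{n}{i}$,
i.e.\ the vanishing of the connecting homomorphism in the Tor sequence of
$0 \to (x_1,\ldots,x_n)E \to E \to E/(x_1,\ldots,x_n)E \to 0$.
Already for $n=1$, $S=R=\Bbbk[x_1,x_2]$ and $E=\Exp(1)=(x_1)$ this fails: here $(x_1)E=(x_1^2)$, $c(E)=1$, so the right-hand side is $\beta_0^S\bigl((x_1^2)\bigr)+1=2$, while $\beta_0^S\bigl((x_1)\bigr)=1$. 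The connecting map $\Tor_1^S(E/(x_1,\ldots,x_n)E,\Bbbk)\to\Tor_0^S((x_1,\ldots,x_n)E,\Bbbk)$ is forced to be nonzero whenever a minimal generator of $(x_1,\ldots,x_n)E$ fails to be a minimal generator of $E$, which is the generic situation. No multigraded analysis will rescue this; the identity you want simply does not hold for expansive ideals. Second, the ``secondary induction'' is not well-founded: along the chain $I\supseteq(x_1,\ldots,x_n)I\supseteq(x_1,\ldots,x_n)^2I\supseteq\cdots$ the constant terms of the Hilbert polynomials are \emph{not} bounded below (e.g.\ $\HP\bigl((x_1^k),\zeta\bigr)=\zeta+1-k$ in $\Bbbk[x_1,x_2]$), so the descent never terminates and there is no base case to anchor it. (You also quote \hyperref[A3]{(A3)} with the inclusion reversed, but that is minor by comparison.)

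The paper avoids both traps by quotienting by a single non-zerodivisor rather than by $(x_1,\ldots,x_n)$: it passes to the preimage $\mI\subseteq S$ and uses that $x_n$ is regular on $S$ and on $\mI$, so $\beta_i^S(\mI)=\beta_i^{\ovS}(\mI/x_n\mI)$ \emph{exactly} --- no connecting map to control and no descending chain to traverse. The module $\mI/x_n\mI$ then splits as a direct sum $\mI_0\oplus\bigoplus_\ell I_\ell/I_{\ell-1}$ (with an extra summand $\ovR/I_{d_n-1}$ when $d_n<\infty$); the summand $\mI_0$ is handled by the induction on $n$ together with \hyperref[A3]{(A3)}, Corollary \ref{CorollaryHyperplaneSections} and Lemma \ref{LemmaUpperBoundBettiFreeModule}(i), while the remaining summands are free $\Bbbk[x_{n+1}]$-modules killed by $(x_1,\ldots,x_{n-1})$, for which Lemma \ref{LemmaUpperBoundBettiFreeModule}(ii) gives exact Betti numbers. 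If you want to salvage your write-up, replace the quotient by $(x_1,\ldots,x_n)I$ with the quotient by $x_n$ and redo the bookkeeping of ranks along those lines.
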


\begin{proof}
We proceed by induction on $n$. 
The case $n=0$ is trivial, so  let $n>0$.
By Lemma \ref{LemmaAlmostLexMerminMurai}, we may assume without loss of generality that $I$ is strongly stable.
Let $\mI, \mC$ denote the preimages of  $I, \CC(d) \subseteq R$ in the polynomial ring $S$.
There are decompositions 
\begin{equation}\label{EqDecompositionIandE}
\mI = \bigoplus_{\ell=0}^\infty \mI_\ell x_n^\ell
\quad
\text{and}
\quad
\mC = \bigoplus_{\ell=0}^\infty \mC_\ell x_n^\ell,
\end{equation}
where $\mI_\ell, \mC_\ell$ are ideals of $\ovS$. 
Specifically, $\mI_\ell \subseteq \ovS$ is the preimage of $I_\ell \subseteq \ovR$ if $\ell < e_n$, and $\mI_\ell = \ovS $ if $e_n \leq \ell < \infty$; 
likewise for $\mC_\ell$.
Since $R/I \cong S/\mI$ and   $R/\CC(d) \cong S/\mC$,
we must prove that $\beta_{i}^S(\mI) \leq \beta_{i}^S(\mC)$ for all $i$.
The variable $x_n$ is a non-zerodivisor on $S, \mI, \mC,$ so it suffices to 
prove $\beta_{i}^\ovS(\mI/x_n\mI) \leq \beta_{i}^\ovS(\mC/x_n\mC)$ for all $i$.
  
Let $\mJ\subseteq \ovS$ be the preimage of  $\CC(I_0) \subseteq \ovR$.
Since $\ovS/\mI_0 \cong R/I_0$ and $\ovS/\mJ \cong \ovR / \CC(I_0)$, 
we have $  \beta_i^\ovS(\mI_0) \leq \beta_i^\ovS(\mJ)$ for every $i \geq 0$  by induction.
Applying  \hyperref[C4]{(C4)} with $\rho =0$,
we get $\mult(I_0 )\leq\mult(\CC(d)_0)$,
and from \hyperref[C3]{(C3)} we deduce  $\CC(d)_0 \subseteq \CC(I_0)$ and, 
hence,
$\mC_0 \subseteq \mJ$.
By Proposition \ref{PropositionFiniteLengthRank},
the quotient $\mJ/\mC_0 \cong \CC(I_0)/ \CC(d)_0$ is a  free $\Bbbk[x_{n+1}]$-module   of rank 
 $r_0 = \mult(\CC(d)_0)- \mult(\CC(I_0))$. 
Applying Lemma \ref{LemmaUpperBoundBettiFreeModule} (i) to  the short exact sequence $ 0\rightarrow \mC_0 \rightarrow \mJ \rightarrow \mJ/\mC_0 \rightarrow 0$,
we obtain
\begin{equation}\label{EqFirstEstimateFiniteResolution}
 \beta_i^\ovS(\mI_0) \leq 
\beta_i^\ovS ( \mJ) \leq \beta_i^\ovS ( \mC_0) +  \beta_i^\ovS ( \mJ/\mC_0)  \leq  \beta_i^\ovS ( \mC_0) +  r_0\beta_i^\ovS\big(\Bbbk[x_{n+1}]\big).
\end{equation}

First, assume that $e_n = \infty$. 
From \eqref{EqDecompositionIandE},
we deduce
 decompositions of $\ovS$-modules 
 \begin{equation}\label{EqDecompositionQuotientsIandE}
 \quad
\frac{\mI}{x_n\mI}\cong \mI_0 \oplus \bigoplus_{\ell=1}^\infty \frac{\mI_\ell}{\mI_{\ell-1}} \cong \mI_0 \oplus \bigoplus_{\ell=1}^\infty \frac{I_\ell}{I_{\ell-1}},
 \quad
 \frac{\mC}{x_n\mC}\cong \mC_0\oplus \bigoplus_{\ell=1}^\infty \frac{\mC_\ell}{\mC_{\ell-1}} \cong \mC_0\oplus \bigoplus_{\ell=1}^\infty \frac{\CC(d)_\ell}{\CC(d)_{\ell-1}}. 
 \end{equation}
Applying  Proposition \ref{PropositionElementaryPropertiesDecomposition} (2) and (6) we see that
the terms $\bigoplus_{\ell=1}^\infty \frac{I_\ell}{I_{\ell-1}}$ and  $\bigoplus_{\ell=1}^\infty \frac{\CC(d)_\ell}{\CC(d)_{\ell-1}}$ are   free $\Bbbk[x_{n+1}]$-modules of ranks $ r_1 = \mult(I_0)$ and $r_2 = \mult(\CC(d)_0)$, respectively.
Moreover, by Proposition \ref{PropositionElementaryPropertiesDecomposition} (5),  
they are annihilated by $(x_1, \ldots, x_{n-1})$.
Using Lemma \ref{LemmaUpperBoundBettiFreeModule} (ii) and 
combining with \eqref{EqFirstEstimateFiniteResolution}, 
we obtain
\begin{align*}
\beta_{i}^\ovS(\mI/x_n\mI) & = \beta_{i}^\ovS(\mI_0) + \beta_{i}^\ovS\left( \bigoplus_{\ell=1}^\infty \frac{\mI_\ell}{\mI_{\ell-1}}\right) =  \beta_{i}^\ovS(\mI_0) + r_1\beta_{i}^\ovS\big(\Bbbk[x_{n+1}]\big)\\
& \leq   \beta_i^\ovS ( \mC_0)  +  (r_0+ r_1)\beta_{i}^\ovS\big(\Bbbk[x_{n+1}]\big).
\end{align*}
Finally, we have  $ \beta_i^\ovS ( \mC/x_n\mC) =    \beta_i^\ovS ( \mC_0)  +  (r_0+ r_1)\beta_{i}^\ovS\big(\Bbbk[x_{n+1}]\big)$ by \eqref{EqDecompositionQuotientsIandE} and Lemma \ref{LemmaUpperBoundBettiFreeModule} (ii), 
since $\bigoplus_{\ell=1}^\infty \frac{\CC(d)_\ell}{\CC(d)_{\ell-1}}$ has rank $r_2 = r_0+r_1$.
This concludes the proof in this case. 

Now, assume  $e_n < \infty$.
The decompositions of $\ovS$-modules obtained from  \eqref{EqDecompositionIandE} become 
 \begin{equation}\label{EqSecondDecompositionQuotientsIandE}
\frac{\mI}{x_n\mI}\cong\mI_0 \oplus \bigoplus_{\ell=1}^{e_n-1} \frac{I_\ell}{I_{\ell-1}} \oplus \frac{\ovR}{I_{e_n-1}}
 \quad
 \text{and}
 \quad
 \frac{\mC}{x_n\mC}\cong \mC_0\oplus \bigoplus_{\ell=1}^{e_n-1} \frac{\CC(d)_\ell}{\CC(d)_{\ell-1}} \oplus \frac{\ovR}{\CC(d)_{e_n-1}}.
 \end{equation}
Our goal is to estimate $\beta_{i}^\ovS(\ovR/I_{e_n-1})$.
By induction, we have 
$\beta_{i}^\ovS(\ovR/I_{e_n-1}) \leq \beta_{i}^\ovS\big(\ovR/\CC(I_{e_n-1})\big)
$ 
for all $i\geq 0$.
Using 
 Proposition \ref{PropositionElementaryPropertiesDecomposition} (4)
and 
\hyperref[C4]{(C4)} 
with $\rho = e_n-2$
we see that
\begin{align*}
\sum_{\ell=0}^{e_n-1} \mult(I_\ell) &= \mult(I) =  d
=\mult(\CC(d)) = \sum_{\ell=0}^{e_n-1} \mult(\CC(d)_\ell)
\intertext{and}
  \sum_{\ell=0}^{e_n-2} \mult(I_\ell) &\leq \sum_{\ell=0}^{e_n-2} \mult(\CC(d)_\ell),
\end{align*}
implying that $\mult(I_{e_n-1}) \geq \mult(\CC(d)_{e_n-1})$, and, thus,
$\CC(I_{e_n-1}) \subseteq \CC(d)_{e_n-1}$,
by \hyperref[C3]{(C3)}.
The  exact sequence 
$
0 \rightarrow {\CC(d)_{e_n-1}}/{\CC(I_{e_n-1})} \rightarrow {\ovR}/{\CC(I_{e_n-1})} \rightarrow {\ovR}/{\CC(d)_{e_n-1}} \rightarrow  0
$
yields
\begin{equation}\label{EqEstimateLastPart}
\beta_{i}^\ovS(\ovR/I_{e_n-1}) \leq \beta_{i}^\ovS\big(\ovR/\CC(I_{e_n-1})\big) \leq  \beta_{i}^\ovS\left(\frac{\CC(d)_{e_n-1}}{\CC(I_{e_n-1})}\right) + \beta_{i}^\ovS\left(\frac{\ovR}{\CC(d)_{e_n-1}}\right).
\end{equation}

Finally, we are going to  use \eqref{EqSecondDecompositionQuotientsIandE} to give an upper bound for $\beta_i^\ovS(\mI/x_n\mI)$.
As before, 
the $\ovS$-modules 
$\bigoplus_{\ell=1}^{e_n-1} \frac{I_\ell}{I_{\ell-1}}$ and  $\bigoplus_{\ell=1}^{e_n-1} \frac{\CC(d)_\ell}{\CC(d)_{\ell-1}}$ are annihilated by $(x_1, \ldots, x_{n-1})$,
and, 
by  Proposition \ref{PropositionElementaryPropertiesDecomposition} (6),   
they are  free $\Bbbk[x_{n+1}]$-modules of  ranks 
 $ r'_1= \mult(I_0)-\mult(I_{e_n-1})$ and $r'_2 =\mult(\CC(d)_0) - \mult(\CC(d)_{e_n-1})$, respectively.
By Proposition \ref{PropositionFiniteLengthRank},
the module  $\frac{\CC(d)_{e_n-1}}{\CC(I_{e_n-1})}$ is also free over $\Bbbk[x_{n+1}]$, of  rank $r_3 = \mult(\CC(I_{e_n-1}))- \mult(\CC(d)_{e_n-1})$.
Combining the decomposition \eqref{EqSecondDecompositionQuotientsIandE} and the bounds \eqref{EqFirstEstimateFiniteResolution}, \eqref{EqEstimateLastPart}, and using Lemma \ref{LemmaUpperBoundBettiFreeModule} (i), 
we find
\begin{align*}
&\quad\beta_{i}^\ovS(\mI/x_n\mI)  = \beta_{i}^\ovS(\mI_0) + \beta_{i}^\ovS\left( \bigoplus_{\ell=0}^{e_n-1} \frac{I_\ell}{I_{\ell-1}}\right) + \beta_{i}^\ovS\left(\frac{\ovR}{I_{e_n-1}}\right) \\
\leq  &
\left[  \beta_i^\ovS ( \mC_0) +  r_0\beta_i^\ovS\big(\Bbbk[x_{n+1}]\big)\right] 
+ r'_1\beta_i^\ovS\big(\Bbbk[x_{n+1}]\big) + \left[ r_3\beta_i^\ovS\big(\Bbbk[x_{n+1}]\big) +  \beta_{i}^\ovS\left(\frac{\ovR}{\CC(d)_{e_n-1}}\right)\right]\\
= \,&\, \beta_i^\ovS ( \mC_0) +  (r_0+r'_1+r_3)\beta_i^\ovS\big(\Bbbk[x_{n+1}]\big) +  \beta_{i}^\ovS\left(\frac{\ovR}{\CC(d)_{e_n-1}}\right).
\end{align*}
The expression in the last line is equal to 
$ \beta_i^\ovS ( \mC/x_n\mC)$, 
because of \eqref{EqSecondDecompositionQuotientsIandE},  Lemma \ref{LemmaUpperBoundBettiFreeModule} (ii),
and the fact that $r'_2 = r_0+r'_1+r_3$.
This concludes the proof.
\end{proof}

\begin{remark}
The numerical bounds on the Betti numbers provided by Theorem \ref{TheoremExtremalFiniteResolution} can be determined by means of the combinatorial formula in \cite[Proposition 2.1]{Mu08}.
The formula also implies that the bounds are independent of the characteristic of  $\Bbbk$.
\end{remark}

\section{Infinite free resolutions}\label{SectionInfinite}

In this section, 
we investigate  bounds for the  Betti numbers of  the  infinite free resolutions associated to a finite subscheme of a Clements-Lindstr\"om scheme.

We begin by proposing the following  natural problem.

\begin{conj}\label{ConjectureInfiniteResolution}
Let $R$ be a Clements-Lindstr\"om ring. 
We have
$\beta_{i}^{R}(I) \leq \beta_{i}^{R}(\CC(d)) $ 
for every $I\in \Hilb^d(\Proj R)$ and  every $i \geq 0$.
\end{conj}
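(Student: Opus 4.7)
The plan is to adapt the proof of Theorem \ref{TheoremExtremalFiniteResolution} by replacing $S$-Betti numbers with $R$-Betti numbers throughout. I would induct on the number of variables $n+1$, with the base case $n=0$ trivial since $R=\Bbbk[x_{n+1}]$ has only the zero and the unit ideal as saturated ideals.

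For the inductive step, two preparatory reductions are required. The first is an $R$-Betti analog of Lemma \ref{LemmaAlmostLexMerminMurai}: given $I\in\Hilb^{p(\zeta)}(\Proj R)$, I would choose a non-zerodivisor linear form (which we may take to be $x_{n+1}$ after a change of coordinates) and replace $I$ by the almost lex extension $L=\Lex(\wI)R$. Via Remark \ref{RemarkModuloXNplus1} the desired inequality $\beta_i^R(R/I)\leq \beta_i^R(R/L)$ reduces to the Mermin--Murai type statement
$$
\beta_i^{\wR}(\wR/\wI)\leq \beta_i^{\wR}\bigl(\wR/\Lex(\wI)\bigr)
$$
for ideals of the (possibly Artinian) Clements--Lindstr\"om ring $\wR$. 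Once this is granted, one may assume $I$ is almost lex, hence strongly stable, and apply the decomposition $I=\bigoplus_{\ell=0}^{d_n-1}I_\ell x_n^\ell$ of Section \ref{SectionDecomposition}.

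Since $x_n$ is a non-zerodivisor on $R/I$ and $R/\Exp(p)$, the problem further reduces to comparing $\beta_i^{\ovR}(I/x_nI)$ with $\beta_i^{\ovR}(\Exp(p)/x_n\Exp(p))$. These $\ovR$-modules split into $I_0$ (resp.\ $\Exp(p)_0$) together with successive quotients $I_\ell/I_{\ell-1}$; by Proposition \ref{PropositionElementaryPropertiesDecomposition}(3,6) the latter are direct sums of shifted copies of $\Bbbk[x_{n+1}]$ annihilated by $(x_1,\ldots,x_{n-1})$. A direct $\ovR$-Betti analog of Lemma \ref{LemmaUpperBoundBettiFreeModule} holds with essentially the same proof, since submodules of free $\Bbbk[x_{n+1}]$-modules remain free and $\Tor^{\ovR}$ is subadditive across short exact sequences. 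The inductive hypothesis applied to $I_0$ yields $\beta_i^{\ovR}(I_0)\leq \beta_i^{\ovR}(\Exp(I_0))$, while Corollary \ref{CorollaryHyperplaneSections} together with axiom \hyperref[A3]{(A3)} gives $\Exp(p)_0\subseteq \Exp(I_0)$, which absorbs the extra free contribution $\mJ/\mE_0$ into the target bound. The rank bookkeeping $c_2=c_0+c_1$ from Proposition \ref{PropositionElementaryPropertiesDecomposition}(7) then closes the induction, mirroring the case analysis $d_n=\infty$ vs.\ $d_n<\infty$ in the proof of Theorem \ref{TheoremExtremalFiniteResolution}.

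The principal obstacle is the $\wR$-Betti Mermin--Murai statement above. The original result \cite[Theorem 8.1]{MeMu11} is formulated over the ambient polynomial ring, where Macaulay-type cancellation and polarization suffice; upgrading to Betti numbers over the complete-intersection ring $\wR$ itself requires controlling the infinite tail induced by the relations $x_i^{d_i}$, and standard lex-deformation techniques do not obviously preserve $\wR$-linear syzygies. For quadratic $\wR$ in characteristic zero this is presumably how Theorem \ref{TheoremIfninite} is proved, most likely by Koszul duality relating the full $\wR$-Poincar\'e series to the finite linear strand over a dual exterior algebra, and then invoking the finite-resolution extremality. Extending this to arbitrary degree sequences or positive characteristic appears genuinely hard and seems to require new input, perhaps via Eisenbud operators on the resolution or BGG-type correspondences compatible with the embedded complete intersection $\wR\hookrightarrow \wS$.
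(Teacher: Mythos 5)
You should first note that the statement you are proving is stated in the paper as a \emph{conjecture}: the paper itself only establishes it for quadratic Clements--Lindstr\"om rings in characteristic zero (Theorem \ref{TheoremIfninite}), and your closing paragraph rightly concedes that the general case needs new ideas. The issue is that your middle paragraphs present the inductive step as if it goes through once the Mermin--Murai reduction over $\wR$ is granted, and that is not the case. The decisive gap is the sentence ``Since $x_n$ is a non-zerodivisor on $R/I$ and $R/\Exp(p)$, the problem further reduces to comparing $\beta_i^{\ovR}(I/x_nI)$ with $\beta_i^{\ovR}(\Exp(p)/x_n\Exp(p))$.'' When $d_n<\infty$ the element $x_n$ is a zerodivisor on $R$ itself (since $x_n\cdot x_n^{d_n-1}=0$), so $I$ is not a free, or even torsion-free, $\Bbbk[x_n]$-module over $R$, and $\beta_i^R(I)$ cannot be computed as $\beta_i^{\ovR}(I/x_nI)$. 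This reduction is exactly the step that works for the \emph{finite} resolution over $S$ in Theorem \ref{TheoremExtremalFiniteResolution} (there $x_n$ really is $S$-regular and regular on the preimages $\mI,\mE$) but fails over $R$; it is why $\beta_i^R$ is infinite in homological degree for all $i$ while $\beta_i^{\ovR}(I/x_nI)$ would be governed by a finite amount of data.

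The paper's proof of Theorem \ref{TheoremIfninite} circumvents this by introducing the intermediate ring $T=S/(x_1^{d_1},\ldots,x_{n-1}^{d_{n-1}})$, on which $x_n$ \emph{is} a non-zerodivisor, taking the ideal $\mI\subseteq T$ generated by the monomial minimal generators of $I$, and then transferring the minimal $T$-free resolution $\FF$ of $\mI$ to an acyclic minimal complex $\EE=\sigma(\FF)$ of $R$-modules whose summands are copies of $R$ or of $R/(x_n)$; a double complex built on $\EE$ (the Aramova--Avramov--Herzog / Eisenbud--Popescu--Yuzvinsky / Gasharov--Hibi--Peeva construction, not Koszul duality or BGG as you guess) then computes the $R$-resolution. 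This transfer hinges on the $x_n$-twists of $\FF$ lying in $\{0,1\}$, which forces $d_n=2$; for $d_n\geq 3$ the truncation $\sigma$ no longer produces summands of the two controlled types, and no substitute is known. So your outline, while correctly locating the Mermin--Murai obstacle over $\wR$ (itself only available in characteristic zero via Murai--Peeva, as in Lemma \ref{LemmaReductionStableInfiniteResolution}), omits the second and equally essential obstruction, and the argument as written does not prove the conjecture beyond the cases the paper already covers.
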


When the  field has characteristic zero,
the results of \cite{MuPe12}
 reduce the problem to strongly stable ideals.

\begin{lemma}\label{LemmaReductionStableInfiniteResolution}
Assume that  $\ch(\Bbbk)=0$.
For every $I\in \Hilb^d(\Proj R)$ there exists a strongly stable  $J\in \Hilb^d(\Proj R)$
such that $\beta_{i}^R(I)\leq \beta_{i}^R(J)$ for all $i\geq 0$.
\end{lemma}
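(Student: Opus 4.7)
The plan is to adapt the proof of Lemma~\ref{LemmaAlmostLexMerminMurai} to the infinite-resolution setting, with \cite[Theorem~1.4]{MuPe12} playing the role that \cite[Theorem~8.1]{MeMu11} played in the finite case. The characteristic-zero hypothesis enters here precisely because the Murai--Peeva bound for Betti numbers over a Clements--Lindstr\"om ring (rather than over an ambient polynomial ring) is known only in that setting.

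First, I would produce a suitable linear form. Since $I \subseteq R$ is saturated, there is an $\ell \in [S]_1$ that is a non-zerodivisor on $R/I$; because $(x_1, \ldots, x_n)$ is the nilradical of $R$, any such $\ell$ must have a nonzero $x_{n+1}$-coefficient. The linear change of coordinates that fixes $x_1, \ldots, x_n$ and sends $x_{n+1}$ to $\ell$ leaves the defining relations $x_i^{d_i}$ ($i \leq n$) untouched, so it preserves $R$ (and thus all $\beta_{i,j}^R(-)$); we may therefore assume $\ell = x_{n+1}$. With the notation of Remark~\ref{RemarkModuloXNplus1}, set $\wI = \frac{I + (x_{n+1})}{(x_{n+1})} \subseteq \wR$ and let $\wJ = \Lex(\wI) \subseteq \wR$.

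Next, I would invoke \cite[Theorem~1.4]{MuPe12} to obtain $\beta_{i,j}^{\wR}(\wR/\wI) \leq \beta_{i,j}^{\wR}(\wR/\wJ)$ for all $i, j$. Define $J = \wJ R \subseteq R$. Since $\wJ$ is lex in $\wR$ and $x_{n+1}$ is a non-zerodivisor on $R/J$ by construction, $J$ is almost lex. The relation $\HF(I, d) - \HF(I, d-1) = \HF(\wI, d) = \HF(\wJ, d) = \HF(J, d) - \HF(J, d-1)$ from Remark~\ref{RemarkModuloXNplus1}, combined with the vanishing of $\HF(I, -1)$ and $\HF(J, -1)$, yields $\HF(I) = \HF(J)$ and hence $J \in \Hilb^{p(\zeta)}(\Proj R)$.

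To conclude, I would apply Remark~\ref{RemarkModuloXNplus1} to both $I$ and $J$, obtaining $\beta_{i,j}^R(R/I) = \beta_{i,j}^{\wR}(\wR/\wI)$ and $\beta_{i,j}^R(R/J) = \beta_{i,j}^{\wR}(\wR/\wJ)$; this transfers the Murai--Peeva inequality from $\wR$ to $R$, and the standard shift $\beta_{i,j}^R(I) = \beta_{i+1,j}^R(R/I)$ passes it from quotients to ideals. The only genuine obstacle is ensuring that \cite[Theorem~1.4]{MuPe12} is available in the exact form stated above for a Clements--Lindstr\"om ring in characteristic zero; once this input is granted, the remainder of the argument is a mechanical transcription of the finite-resolution proof.
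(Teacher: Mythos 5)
Your proposal is correct and follows the same route the paper intends: the paper gives no separate proof but explicitly says to proceed as in Lemma \ref{LemmaAlmostLexMerminMurai} with \cite[Theorem 1.4]{MuPe12} replacing \cite[Theorem 8.1]{MeMu11}, and your extra care about using a coordinate change that fixes $x_1,\dots,x_n$ (so that it descends to $R$ and preserves $\beta^R$) is exactly the right refinement for the infinite-resolution setting. One small slip: $(x_1,\dots,x_n)$ is the nilradical of $R$ only when all $d_i<\infty$, so in general you should instead choose $\ell$ generically (the field is infinite in characteristic $0$), where the open condition of being a non-zerodivisor on $R/I$ meets the open condition of having nonzero $x_{n+1}$-coefficient.
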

\begin{proof}
This follows from \cite[Theorem 1.4]{MuPe12},  proceeding exactly as in  Lemma \ref{LemmaAlmostLexMerminMurai}.
\end{proof}

The following theorem is the main result of this section.
The proof employs  a construction from \cite{ArAvHe00,EiPoYu03,GaHiPe02}.

\begin{thm}\label{TheoremIfninite}
Let $S = \Bbbk[x_1, \ldots, x_{n+1}]$ be a polynomial ring  and $R= S/(x_1^{e_1}, \ldots, x_n^{e_n})$  a Clements-Lindstr\"om ring, where $e_j\in \{ 2, \infty\}$ for every $j$.
Assume that $\ch(\Bbbk)=0$.
We have
$
\beta_{i}^R\big(I\big) \leq \beta_{i}^R\big(\CC(d)\big) 
$
for every $I\in\Hilb^d(\Proj R)$ and  every $i \geq 0$.
\end{thm}

\begin{proof}
We proceed by induction on $n$, and the case $n=0$ is trivial, so let $n>0$.
By Lemma \ref{LemmaReductionStableInfiniteResolution}, we may assume that $I$ is strongly stable.
In  addition to Notation \ref{NotationRings},
in this proof we consider the ``intermediate'' ring 
$$
T = \frac{S}{(x_1^{e_1}, \ldots, x_{n-1}^{e_{n-1}})},
$$
so that $R=T/(x_n^{e_n})$.
By assumption,  either $e_n = \infty$, in which case $T=R$,
or $e_n =2$.
Consider the ideal $\mI\subseteq T$ generated by the monomials of $T$ corresponding to the minimal generators of $I\subseteq R$,
that is,
the ideal
$$
\mI = \bigoplus_{\ell=0}^{e_n-1} I_\ell x_n^\ell \oplus \bigoplus_{\ell=e_n}^{\infty} I_{e_n-1} x_n^\ell.
$$
Notice that $\mI$ may be smaller than the preimage of $I$ in $T$ if $e_n=2$, whereas  $\mI=I$  if $e_n=\infty$.
Since $x_{n}$ is a non-zerodivisor on $T$ and $\mI$, and  $T/(x_{n}) \cong \ovR$, 
we have $\beta_{i,j}^{T}(\mI) = \beta_{i,j}^{\ovR}(\mI/x_{n}\mI)$.
We have a decomposition of $\ovR$-modules
\begin{equation}\label{EquationDecompositionRT}
\frac{\mI}{x_{n}\mI} = I_0 \oplus  \bigoplus_{\ell =1}^{e_n-1} \frac{I_\ell}{I_{\ell-1}}.
\end{equation}
By induction, 
$\beta_i^\ovR(I_0)\leq\beta_i^\ovR(\CC(I_0))$.
In the proof of Theorem \ref{TheoremExtremalFiniteResolution},
we established that $\CC(d)_0 \subseteq \CC(I_0)$, 
and that 
$\frac{\CC(I_0)}{\CC(d)_0}$
is a  free $\Bbbk[x_{n+1}]$-module of rank $r_0 = \mult(\CC(d)_0)-\mult(I_0)$.
By Lemma \ref{LemmaUpperBoundBettiFreeModule} (i), we obtain
\begin{equation}\label{InequalityBetti0InfiniteResolution}
\beta_i^\ovR(I_0)\leq\beta_i^\ovR(\CC(I_0)) \leq \beta_i^\ovR(\CC(d)_0) + r_0 \beta_i^{\ovR}(\Bbbk[x_{n+1}]).
\end{equation}
First, assume that 
 $ e_n = \infty$.  
We have seen,  in the proof of Theorem \ref{TheoremExtremalFiniteResolution},
that the $\ovR$-module  
$\oplus_{\ell =1}^{e_n-1} \frac{I_\ell}{I_{\ell-1}}= \oplus_{\ell =1}^{\infty} \frac{I_\ell}{I_{\ell-1}}$ 
is annihilated by $(x_1, \ldots, x_{n-1})$, and is a free $\Bbbk[x_{n+1}]$-module of rank $r_1 =\mult(I_0) $.
By Lemma \ref{LemmaUpperBoundBettiFreeModule}~(ii), 
we get
$
\beta_i^{R}(I) = \beta_i^{\ovR}(I_0) + r_1 \beta_i^{\ovR}(\Bbbk[x_{n+1}]),
$
and, likewise,
$
\beta_i^{R}(\CC(d)) =  \beta_i^{\ovR}(\CC(d)_0) +  r_2 \beta_i^{\ovR}(\Bbbk[x_{n+1}]),
$
where $r_2 = \mult(\CC(d)_0)=r_0+r_1$.
Combining with \eqref{InequalityBetti0InfiniteResolution}, we conclude that $\beta_i^{R}(I) \leq 
\beta_i^{R}(\CC(d))$ as desired.

For the rest of the proof, assume  $e_n = 2$.
The $\ovR$-module  $\oplus_{\ell =1}^{e_n-1} \frac{I_\ell}{I_{\ell-1}} =\frac{I_1}{I_{0}}  $ 
is annihilated by $(x_1, \ldots, x_{n-1})$, and is a free $\Bbbk[x_{n+1}]$-module of rank $r'_1 =\mult(I_0)-\mult(I_{1}) $.
By Lemma \ref{LemmaUpperBoundBettiFreeModule}~(ii) and \eqref{EquationDecompositionRT},
we obtain
\begin{equation}\label{EqBettiNumbersTandOvR}
\beta_i^{T}(\mI) = \beta_i^{\ovR}(I_0) + r'_1\beta_i^{\ovR}(\Bbbk[x_{n+1}]).
\end{equation}

We regard $R,\ovR,$ and $T$ as  $\mathbb{Z}^{n+1}$-graded,
but we  also consider the $\mathbb{Z}$-grading induced by the variable $x_n$ only.
If $M$ is a $\ZZ^{n+1}$-graded $T$-module, 
we define $\sigma(M)$ to be the vector space consisting of the graded components of $M$ with $x_n$-degrees $0$ or $1$.
Clearly, $\sigma$ defines an exact functor from the category of $\ZZ^{n+1}$-graded $T$-modules
to the category of $\ZZ^{n+1}$-graded $\Bbbk$-vector spaces.

Let $\FF$ be the minimal $\mathbb{Z}^{n+1}$-graded free resolution of $\mI$ over $T$.
The $x_{n}$-twists in this  resolution are all equal to 0 or 1:
this follows from the fact that $\FF \otimes_T \frac{T}{(x_{n})}$ is a minimal $\mathbb{Z}^{n+1}$-graded free resolution of $\mI/x_{n}\mI$ over $\ovR$,
and that $\mI/x_{n}\mI$  is generated in $x_n$-degrees  $0,1$.
The complex $\EE=\sigma(\FF)$ is acyclic and  minimal,  in the sense that the image of  its differential lies in $(x_1, \ldots, x_{n+1})\EE$.
Each direct summand in  $\FF$  has the form $T(-\delta_1, \ldots, -\delta_n, -\delta_{n+1})$ with $\delta_n \in \{0,1\}$; 
the corresponding summand in $\EE$ is  a factor ring of $R = T/(x_n^2)$, namely
$$
\sigma\big(T(-\delta_1, \ldots, -\delta_n, -\delta_{n+1})\big) \cong \frac{R}{(x_n^{2-\delta_n})}(-\delta_1, \ldots, -\delta_n, -\delta_{n+1}).
$$
The cyclic $R$-module on the right hand side is free if and only if $\delta_n = 0$.
In fact, $\EE$ is an acyclic minimal $\mathbb{Z}^{n+1}$-graded complex of (not necessarily free) finitely generated $R$-modules.
Since all the $x_n$-twists in $\FF$  are in $\{0,1\}$,  every free summand of $\FF$ contributes with a  non-zero  summand in $\EE$.
In other words,
 in every homological degree $i$,
the numbers of generators is the same for $\FF$ and $\EE$, and this number is $\beta_i^{T}(\mI)$.
Among the direct summands of $\EE$, the free modules are precisely those coming from copies of $T$ in $\FF$ with $x_n$-twist equal to 0.
These modules form themselves another complex $\EE'$, which is again minimal and acyclic, but it is even free.
In fact, $\EE'$ is the minimal free resolution of $I_0$ over $\ovR$, since $I_0$ is the truncation of $\mI$ in $x_n$-degree $0$, and $\ovR$ is the truncation of $T$ in $x_n$-degree 0. 
We conclude that, in homological degree $i$, in $\EE$ we have exactly $\beta^\ovR_i(I_0)$  free summands, i.e.,  copies of $R$.

To summarize,
$\EE$ is an acyclic minimal  complex of $\ZZ^{n+1}$-graded  $R$-modules, 
it has $\beta_i^{T}(\mI)$  generators in homological degree $i$,
of which $\beta^\ovR_i(I_0)$   generate a free module $R$, 
whereas the remaining ones generate a non-free module isomorphic to $R/(x_n)$.
The number of non-free summands of $\EE$ in homological degree $i$ is,  
therefore,
$\beta_i^{T}(\mI)- \beta^\ovR_i(I_0)  =  r'_1\beta_i^{\ovR}(\Bbbk[x_{n+1}])$,
by \eqref{EqBettiNumbersTandOvR}.
Note also that the 0-homology of $\EE$ is $\sigma(T/\mI) =R/I$.

Let $E_i$ denote the module in homological degree $i$ in $\EE$.
The differentials of $\EE$ can be lifted to a complex of complexes, namely a double complex $\DD_I$ of $R$-modules where the 
$i$-th vertical complex is  the minimal free resolution of $E_i$.
By construction, the double complex  $\DD_I$ is free.
Furthermore, it is minimal, 
and the total complex $\mathrm{Tot}(\DD_I)$ is a minimal $\ZZ^{n+1}$-graded free resolution of $R/I$ over $R$,
cf. 
\cite[Proposition 5.6]{EiPoYu03},
\cite[Theorem 1.3]{ArAvHe00},
or
\cite[Theorem 2.10]{GaHiPe02}.
The $R$-module $R/(x_n)$  has an infinite minimal free resolution over $R$ with $\beta_j^R(R/(x_n))=1$ and differential given by $\cdot x_n$ for every $j \in \NN$.
It follows that in $\mathbb{D}_I$, for each $i\geq 0$,  we have 
\begin{itemize}
\item[$(\ast)$]$\beta^\ovR_i(I_0)$ summands in homological bidegree $(i,0)$ arising from the free summands of $\EE$,

\item[$(\ast \ast)$]  $r'_1\beta_i^{\ovR}(\Bbbk[x_{n+1}])$ summands in homological bidegree $(i,j)$ for every $j\in\NN$,
arising from the  non-free summands of $\EE$,
\end{itemize}
where the first coordinate is  horizontal and the second coordinate is vertical.
We conclude that the Betti numbers of a saturated strongly stable  $I\subseteq R$ depend only on those of $I_0\subseteq \ovR$ and on the number 
$r'_1 =\mult(I_0) - \mult(I_1)$. 

The same construction for $\CC(d)$ yields a double complex $\DD_{\CC(d)}$.
Let $r'_2 =  \mult(\CC(d)_0)-\mult(\CC(d)_1)$.
We observed in the proof of Theorem \ref{TheoremExtremalFiniteResolution} that $\mult(\CC(d)_{1})\leq \mult(I_{1}) $.
We deduce that $r'_2 \geq r_0+r'_1$.
Finally, 
we compare the contribution of the two types of summands $(\ast)$ and $(\ast\ast)$ to the double complexes
$\DD_I$ and $\DD_{\CC(d)}$:

\begin{itemize}
\item[$(\ast)$] 
For every $i\geq 0$,
by \eqref{InequalityBetti0InfiniteResolution}, 
$\DD_I$ has at most  $r_0\beta_i^{\ovR}(\Bbbk[x_{n+1}])$ more summands in position $(i,0)$ than  $\DD_{\CC(d)}$,
among those arising from the free summands of $\EE$.
\item[$(\ast\ast)$] 
For every $i,j\geq 0$,
$\mathbb{D}_{\CC(d)}$ has  at least  $(r'_2-r'_1)\beta_i^{\ovR}(\Bbbk[x_{n+1}])$ more summands in position $(i,j)$ than
$\mathbb{D}_I$, 
among those arising from the non-free summands of $\EE$.
\end{itemize}
Thus, 
$\mathbb{D}_{\CC(d)}$ has at least as many copies of $R$ as $\mathbb{D}_I$, in every position $(i,j)$.
This concludes the proof, 
since 
$\beta_i^R(I),\beta_i^R(\CC(d))$ 
are the Betti numbers of $\mathrm{Tot}(\mathbb{D}_I),\mathrm{Tot}(\mathbb{D}_{\CC(d)})$ respectively.
\end{proof}

In the rest of this section, 
we explore bounds for deviations and Poincar\'e series. 
The deviations of a ring $A$ are a  sequence of  integers $\{ \varepsilon_i(A)\}_{i \geq 1}$
measuring several homological or cohomological data of $A$.
Examples include: the  generators of a Tate resolution of $A$ over a polynomial ring, as well as a Tate resolution of $\Bbbk$ over $A$;
the ranks of the modules in a cotangent complex  of $A$;
 the dimensions of the  components of the homotopy Lie algebra $\pi(A)$ of $A$.
We refer to \cite[Sections 7 and 10]{Av98} for definitions and background.

\begin{lemma}\label{LemmaNoVariable}
Let $I \in \Hilb^d(\Proj R)$ be strongly stable. 
There is an inclusion of vector spaces of linear forms
$[\CC(d)]_1 \subseteq [I]_1$.
\end{lemma}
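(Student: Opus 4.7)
The plan is to induct on the number $n$ of ``power'' variables of $R$ in the notation \eqref{DefinitionRings}. The base case $n=0$ is trivial since $R = \Bbbk[x_1]$ admits only $\{0, R\}$ as saturated ideals. For the inductive step, both $\Exp(p)$ and $I$ are strongly stable and saturated, so each of $[\Exp(p)]_1$ and $[I]_1$ is spanned by an initial segment of $\{x_1, \ldots, x_n\}$; note that $x_{n+1}$ is excluded by saturation. Reading off these degree-$1$ parts through the decomposition \eqref{EqDecomposition}, the inclusion $[\Exp(p)]_1 \subseteq [I]_1$ is equivalent to the two assertions: (a) $[\Exp(p)_0]_1 \subseteq [I_0]_1$ inside $\ovR$, and (b) if $x_n \in \Exp(p)$ then $x_n \in I$.

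Part (a) follows from the induction once we recall the inclusion $\Exp(p)_0 \subseteq \Exp(I_0)$ in $\ovR$, which was established in the course of proving Theorem \ref{TheoremExtremalFiniteResolution} as a consequence of Corollary \ref{CorollaryHyperplaneSections} combined with axiom \hyperref[A3]{(A3)}. Indeed, $I_0 \subseteq \ovR$ is strongly stable and saturated by Proposition \ref{PropositionElementaryPropertiesDecomposition}, so the inductive hypothesis applied in $\ovR$ yields $[\Exp(I_0)]_1 \subseteq [I_0]_1$. Concatenating the two inclusions gives $[\Exp(p)_0]_1 \subseteq [\Exp(I_0)]_1 \subseteq [I_0]_1$.

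For part (b), if $x_n \in \Exp(p)$ then strong stability propagates membership to each of $x_1, \ldots, x_{n-1}$, so $(x_1, \ldots, x_n) \subseteq \Exp(p)$ and $R/\Exp(p)$ becomes a graded quotient of $\Bbbk[x_{n+1}] = R/(x_1, \ldots, x_n)$. Saturation of $\Exp(p)$ means $R/\Exp(p)$ is either zero or has positive depth, and since every nonzero proper graded quotient of $\Bbbk[x_{n+1}]$ is artinian of zero depth, the only possibilities are $\Exp(p) = R$ (with $p(\zeta) = 0$) and $\Exp(p) = (x_1, \ldots, x_n)$ (with $p(\zeta) = 1$). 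Applying the same depth and dimension analysis to $I$, whose Hilbert polynomial is also $p(\zeta)$, forces $I = \Exp(p)$; in particular $x_n \in I$. The main subtlety lies in (b): the mere presence of $x_n$ in $\Exp(p)$ is an extremal configuration that collapses the Hilbert polynomial and pins down both ideals uniquely. By contrast, (a) is a formal consequence of the induction and an inclusion already recorded in Section \ref{SectionSyzygies}.
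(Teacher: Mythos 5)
Your proposal is correct and follows essentially the same route as the paper's proof: induction on $n$, the key inclusion $\Exp(p)_0 \subseteq \Exp(I_0)$ obtained from axiom \hyperref[A6]{(A6)} (equivalently Corollary \ref{CorollaryHyperplaneSections}) together with \hyperref[A3]{(A3)}, and the observation that $x_n \in \Exp(p)$ forces $\Exp(p) = (x_1,\ldots,x_n)$ or $R$, in which case the Hilbert scheme has a unique strongly stable point. Your split into parts (a) and (b) is just a reformulation of the paper's dichotomy $m<n$ versus $m=n$ on $[\Exp(p)]_1 = \langle x_1,\ldots,x_m\rangle_\Bbbk$.
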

\begin{proof}
We may assume $d>0$.
Since $\CC(d)$ is saturated and strongly stable, we have $[\CC(d)]_1 = \langle x_1, \ldots, x_m\rangle_\Bbbk$ for some $0\leq m \leq n$.
If $m=n$, then $\CC(d) = (x_1, \ldots, x_n) \subseteq R$, so $d=1$ and  $I= \CC(d)$.
If $m<n$, then $[\CC(d)]_1 = [\CC(d)_0]_1$.
We induct on $n$, and the case $n=0 $ is trivial.
By  \hyperref[C4]{(C4)}, we have $\mult(I_0)\leq\mult(\CC(d)_0)$,
thus $\CC(d)_0\subseteq \CC(I_0)$ by \hyperref[C3]{(C3)}.
By induction,  $[\CC(I_0)]_1 \subseteq [I_0]_1$,
hence, $[\CC(d)]_1 = [\CC(d)_0]_1 \subseteq[\CC(I_0)]_1 \subseteq [I_0]_1 \subseteq  [I]_1$.
\end{proof}

A consequence of Theorem \ref{TheoremExtremalFiniteResolution} and the results of \cite{BoDaGrMoSa16}
is the fact that an $\CC(d)$  has maximal deviations in the Hilbert scheme of $\PP^n$.

\begin{cor}\label{CorollaryDeviations}
Let $S = \Bbbk[x_1, \ldots, x_{n+1}]$.
We have
$\varepsilon_{i}(S/I) \leq \varepsilon_{i}(S/\CC(d))
$ 
for every $I\in\Hilb^d(\PP^n)$ and  all $i \geq 1$.
\end{cor}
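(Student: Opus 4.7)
The goal is to transfer the maximality of total Betti numbers established in Theorem \ref{TheoremExtremalFiniteResolution} to the maximality of deviations. This is not automatic because deviations are exponents in the product expansion of the Poincar\'e series
$$
P^{S/I}_\Bbbk(z) = \prod_{i\geq 1} \frac{(1+z^{2i-1})^{\varepsilon_{2i-1}(S/I)}}{(1-z^{2i})^{\varepsilon_{2i}(S/I)}},
$$
and a coefficient-wise inequality on Poincar\'e series does not by itself imply coefficient-wise inequalities on deviations. The result of \cite{BoDaGrMoSa16} cited in the statement is designed precisely to bridge this gap.

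The plan has three steps. First, reduce to the case where $I$ is strongly stable. Since $\Exp(p)$ is strongly stable by \hyperref[A1]{(A1)}, and since Lemma \ref{LemmaNoVariable} guarantees that the linear forms of $\Exp(p)$ are contained in those of any strongly stable $I \in \Hilb^{p(\zeta)}(\PP^n)$, we may compare the two ideals inside the same polynomial subring. The reduction to strongly stable $I$ is carried out exactly as in Lemma \ref{LemmaAlmostLexMerminMurai} and Lemma \ref{LemmaReductionStableInfiniteResolution}: one passes to a suitable strongly stable ideal with the same Hilbert function; crucially, this operation does not decrease any deviation, a fact which in characteristic $0$ follows from the Gröbner deformation results underlying those lemmas (this is part of what \cite{BoDaGrMoSa16} provides in the form we need).

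Second, invoke \cite{BoDaGrMoSa16}. For strongly stable $J \subseteq S$, the quotient $S/J$ is Golod (Herzog--Reiner--Welker), and therefore its Poincar\'e series takes the closed Serre--Golod form
$$
P^{S/J}_\Bbbk(z) = \frac{(1+z)^{n+1}}{1 - z\bigl(P^S_{S/J}(z)-1\bigr)}.
$$
The content of \cite{BoDaGrMoSa16} we use is that for ideals in this Golod class, the deviations $\varepsilon_i(S/J)$ can be expressed through explicit formulas in the total Betti numbers $\beta_\bullet^S(S/J)$, and these expressions are componentwise non-decreasing in each Betti number. Applying this to $J = I$ (after reduction) and to the strongly stable ideal $\Exp(p)$, and invoking Theorem \ref{TheoremExtremalFiniteResolution} to conclude $\beta_i^S(S/I) \leq \beta_i^S(S/\Exp(p))$ for every $i$, the monotonicity immediately yields $\varepsilon_i(S/I) \leq \varepsilon_i(S/\Exp(p))$ for every $i \geq 1$, as desired.

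The expected obstacle is precisely the first reduction step: deviations encode the homotopy Lie algebra of $S/I$, not just the Tor groups of $S/I$ over $S$, and they do not obviously behave well under the passage to a strongly stable model. This is overcome by combining the Golod property of strongly stable ideals with the monotone inversion from Poincar\'e series to deviations supplied by \cite{BoDaGrMoSa16}; once these two ingredients are in place, the argument is entirely formal and reduces to the Betti number inequality of Theorem \ref{TheoremExtremalFiniteResolution}.
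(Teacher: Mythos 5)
Your overall architecture --- reduce to a monomial model, then combine the Golod property of strongly stable quotients with the monotone dependence of deviations on total Betti numbers from \cite{BoDaGrMoSa16} and the Betti inequality of Theorem \ref{TheoremExtremalFiniteResolution} --- is the same as the paper's. But your first step, the reduction, has a genuine gap. You assert that replacing $I$ by a strongly stable ideal with the same Hilbert function ``does not decrease any deviation'' and attribute this to Gr\"obner deformation in characteristic $0$. Deviations are not known to be semicontinuous under Gr\"obner degeneration (unlike Betti numbers), so this does not follow from the deformation or lex-plus-powers arguments underlying Lemma \ref{LemmaAlmostLexMerminMurai}; moreover the corollary carries no characteristic hypothesis, so a characteristic-$0$ reduction would not even suffice. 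The statement you actually need is precisely \cite[Theorem 3.4]{BoDaGrMoSa16}: the lex ideal maximizes all deviations among ideals with a fixed Hilbert function. The paper applies it after first passing to $\wS = S/(x_{n+1})$, where $x_{n+1}$ is a nonzerodivisor on $S/I$, using \cite[Proposition 7.1.6]{Av98} to identify $\varepsilon_i(S/I)$ with $\varepsilon_i(\wS/\wI)$ for $i \geq 2$, replacing $\wI$ by $\wL = \Lex(\wI)$, and then extending back to $L = \wL S \subseteq S$. That is where the real content of the reduction lives, and it cannot be waved through as a formality.

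Second, the monotone inversion from Betti numbers to deviations of Golod rings governs only $\varepsilon_i$ for $i \geq 2$: the first deviation is the embedding dimension, which is \emph{not} a monotone function of the total Betti numbers (an ideal containing more linear forms has larger $\beta_1$ but smaller $\varepsilon_1$). The case $i=1$ must be handled separately, and this is exactly the role of Lemma \ref{LemmaNoVariable}: the inclusion $[\Exp(p)]_1 \subseteq [L]_1$ gives $\varepsilon_1(S/L) \leq \varepsilon_1(S/\Exp(p))$. You cite that lemma, but for a different (and unneeded) purpose, and then claim the inequality for all $i \geq 1$ from monotonicity alone; as written, your argument does not establish the $i=1$ case.
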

\begin{proof}
As  in the proof Lemma \ref{LemmaAlmostLexMerminMurai},
we may assume   $I:x_{n+1}=I$.
Let $\wL = \Lex(\wI) \subseteq \wS$.
By \cite[Theorem 3.4]{BoDaGrMoSa16},
we have $\varepsilon_{i}(\wS/\wI) \leq \varepsilon_{i}(\wS/\wL)$  for all $i \geq 2$.
It follows from \cite[Proposition 7.1.6]{Av98} that $\varepsilon_{i}(S/I) \leq \varepsilon_{i}(S/L)$  for all $i \geq 2$,
where $L = \wL S \subseteq S$.
The ideals $L$ and $\CC(d)$ are strongly stable, 
and this implies that $S/L$ and $S/\CC(d)$ are Golod rings
by \cite[Theorem 4]{HeReWe99}.
Now, by \cite[Proposition 3.2]{BoDaGrMoSa16}, 
we derive that $\varepsilon_{i}(S/L) \leq \varepsilon_{i}(S/\CC(d))$  for all $i \geq 2$.
Finally, for $i=1$,
the deviation $\varepsilon_1(A)$ is equal to the embedding dimension of $A$,
cf. \cite[Corollary 7.1.5]{Av98},
therefore,
$\varepsilon_{1}(S/L) \leq \varepsilon_{1}(S/\CC(d))$ by 
Lemma \ref{LemmaNoVariable}.
\end{proof}

In particular, $\CC(d)$  has maximal  Poincar\'e series, 
that is, the generating function of the dimensions of $\Tor_\bullet^A(\Bbbk,\Bbbk)$
or $\mathrm{Ext}^\bullet_A(\Bbbk,\Bbbk)$.

\begin{cor}\label{CorollaryPoincare}
Let $S = \Bbbk[x_1, \ldots, x_{n+1}]$.
We have
$\beta_{i}^{S/I}(\Bbbk) \leq \beta_{i}^{S/\CC(d))}(\Bbbk) $
for every $I\in\Hilb^d(\PP^n)$ and  all $i \geq 0$.
\end{cor}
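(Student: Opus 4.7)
The plan is to deduce the inequality from Corollary \ref{CorollaryDeviations} via the standard product formula expressing the Poincaré series in terms of deviations. Recall (see \cite[Theorem 7.1.3]{Av98}) that for a (graded) Noetherian local ring $A$ with residue field $\Bbbk$ one has
$$
P_A^\Bbbk(t) \;:=\; \sum_{i\geq 0}\beta_i^A(\Bbbk)\,t^i \;=\; \frac{\prod_{i\geq 1}(1+t^{2i-1})^{\varepsilon_{2i-1}(A)}}{\prod_{i\geq 1}(1-t^{2i})^{\varepsilon_{2i}(A)}}.
$$
This identity is the bridge that converts a comparison of deviations into a coefficient-wise comparison of Poincaré series.

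First I would apply Corollary \ref{CorollaryDeviations} to obtain $\varepsilon_i(S/I)\leq \varepsilon_i(S/\Exp(p))$ for every $i\geq 1$, and set $a_i := \varepsilon_i(S/\Exp(p))-\varepsilon_i(S/I)\in\NN$. Substituting into the formula above gives
$$
\frac{P_{S/\Exp(p)}^\Bbbk(t)}{P_{S/I}^\Bbbk(t)} \;=\; \prod_{i\geq 1}(1+t^{2i-1})^{a_{2i-1}}\cdot\prod_{i\geq 1}(1-t^{2i})^{-a_{2i}} \;=:\; Q(t).
$$
The next step is to observe that $Q(t)$ is a formal power series with non-negative coefficients and constant term $1$. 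Indeed, each factor $(1+t^{2i-1})^{a_{2i-1}}$ is a polynomial with non-negative coefficients, and each factor $(1-t^{2i})^{-a_{2i}}=\sum_{j\geq 0}\binom{a_{2i}+j-1}{j}t^{2ij}$ is a power series with non-negative coefficients; the infinite product is well defined as a formal power series since, for any fixed degree $N$, only finitely many of these factors contribute to the coefficient of $t^N$.

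Finally, writing $P_{S/\Exp(p)}^\Bbbk(t)-P_{S/I}^\Bbbk(t) = P_{S/I}^\Bbbk(t)\cdot\bigl(Q(t)-1\bigr)$, both factors on the right have non-negative coefficients (the constant term of $Q-1$ vanishes, while all higher coefficients are non-negative), so the difference has non-negative coefficients. This yields $\beta_i^{S/I}(\Bbbk)\leq \beta_i^{S/\Exp(p)}(\Bbbk)$ for every $i\geq 0$, as required. I do not anticipate any real obstacle here: once Corollary \ref{CorollaryDeviations} and the product formula for $P_A^\Bbbk(t)$ are in place, the proof is essentially a bookkeeping argument on formal power series with non-negative coefficients.
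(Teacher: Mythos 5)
Your argument is correct and is exactly the paper's proof: the paper deduces the corollary by combining Corollary \ref{CorollaryDeviations} with the product formula expressing $P_{A}^{\Bbbk}(t)$ in terms of the deviations (cited as \cite[Remark 7.1.1]{Av98}), and your power-series bookkeeping just makes explicit the monotonicity that the paper leaves implicit.
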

\begin{proof}
Apply Corollary \ref{CorollaryDeviations} and \cite[Remark 7.1.1]{Av98}.
\end{proof}

We conclude this section  by proposing a  generalization of Corollaries \ref{CorollaryDeviations} and \ref{CorollaryPoincare}.

\begin{question}
Let $R$ be a Clements-Lindstr\"om ring. 
Is it true that
$\beta_{i}^{S/I}(\Bbbk) \leq \beta_{i}^{S/\CC(d))}(\Bbbk) $
and
$\varepsilon_{i}(S/I) \leq \varepsilon_{i}(S/\CC(d))
$ 
for every $I\in \Hilb^d(\Proj R)$ and  every $i \geq 0$?
\end{question}

\section{Applications and examples}\label{SectionApplications}

We conclude the paper  by illustrating the applications of our results to Hilbert schemes of points of arbitrary complete intersections,
and by exhibiting explicit examples of the numerical bounds obtained from 
Theorem \ref{TheoremExtremalFiniteResolution}.

\begin{example}
Consider the Clements-Lindstr\"om ring $R = \Bbbk[x_1, x_2, x_3, x_4]/(x_1^2, x_2^2)$
and the Hilbert scheme  $\Hilb^{20}(\Proj R)$.
In order to apply Theorem \ref{TheoremExtremalFiniteResolution},
we compute the Betti numbers of  $\CC(20,R) = (x_1,x_2,x_3)^6 =  
( x_1x_2x_3^4, x_1x_3^5, x_2x_3^5, x_3^6) \subseteq R $,
and find the sharp upper bounds for the syzygies of  $I \in \Hilb^{20}(\Proj R)$
$$
\beta_1^S(R/I) \leq 6,\,
\beta_2^S(R/I) \leq 9, \,
\beta_3^S(R/I) \leq 4.
$$
Note that $\Proj R \subseteq \PP^3$.
If we instead regard $I$ as an element of $\Hilb^{20}(\PP^3)$, 
and use the results of \cite{CaMu13} or \cite{Va94}, 
which involve the Betti numbers of 
$\CC(20,S) = (x_1, x_2, x_3)^4\subseteq S$,
we find the coarser bounds
$$
\beta_1^S(R/I) \leq 15,\,
\beta_2^S(R/I) \leq 24, \,
\beta_3^S(R/I) \leq 10.
$$
\end{example}

We say that a regular sequence $f_1, \ldots, f_c$ has degree sequence $e_1 \leq e_2 \leq \cdots \leq e_n \leq \infty$ if
$c=\max\{j \, : e_j < \infty\}$ and   $e_i = \deg(f_i)$ for every $i \leq c$,
and we extend the same terminology to complete intersections $X \subseteq \mathbb{P}^n$.
A notable consequence of Theorem \ref{TheoremExtremalFiniteResolution} is the fact that, conjecturally, 
it provides sharp upper bounds for all  subschemes $Z\in  \Hilb^d(X)$ of \emph{all} complete intersections $X  \subseteq \mathbb{P}^n$
 with a given degree sequence.
 To justify this claim, 
we recall two famous conjectures on complete intersections.
For our purposes, 
it is convenient to state them in terms of ideals of $\wS = \Bbbk[x_1, \ldots, x_n]$.
 
 \begin{conj}[Eisenbud-Green-Harris]\label{ConjEGH}
If $I\subseteq \wS$ contains a regular sequence of degree sequence  $e_1, \ldots, e_n$, 
then there exists a lex ideal $L\subseteq \wS$ with $\HF(I) = \HF \big( L + (x_1^{e_1},\ldots, x_n^{e_n})\big)$.
\end{conj}

\begin{conj}[Lex-Plus-Powers]\label{ConjLPP}
If $I\subseteq \wS$ contains a regular sequence of degree sequence  $e_1, \ldots, e_n$ and if
there exists a lex ideal $L\subseteq \wS$ with $\HF(I) = \HF \big( L + (x_1^{e_1},\ldots, x_n^{e_n})\big)$,
then $\beta_{i,j}^\wS(\wS/I) \leq \beta_{i,j}^\wS\big(\wS/(L + (x_1^{e_1},\ldots, x_n^{e_n}))\big) $
for all $i,j$.
\end{conj}

We  refer to them as the EGH and LPP Conjectures.
Despite the apparently independent statements, Conjecture \ref{ConjLPP} actually implies Conjecture \ref{ConjEGH}:
more precisely, 
the EGH Conjecture is equivalent to the statement of the LPP Conjecture for $i=1$, see for example \cite[Conjecture 4.7]{FrRi07} and the discussion preceding it.
We refer to   \cite{CDSS21,FrRi07,Gu21} for an overview of these two problems.
We denote $\mu(Z) = \beta_1^S(S/I_Z)$, the number of generators of the saturated ideal  $I_Z\subseteq S$ of a  closed subscheme $Z\subseteq \PP^n$.

\begin{prop}\label{PropositionEGHLPP}
Let $X  \subseteq \PP^n$ be a complete intersection 
of degree sequence $e_1 \leq \cdots \leq e_n \leq \infty$,
and consider the Clements-Lindstr\"om ring $R=\Bbbk[x_1, \ldots, x_{n+1}]/\big(x_1^{e_1},\ldots, x_n^{e_n}\big)$.
\begin{enumerate}

\item If the EGH Conjecture holds, then 
$
\mu(Z) \leq \beta_1^S(R/\CC(d,R)) 
$
for every  $Z\in\Hilb^d(X)$.

\item If the LPP Conjecture holds, then 
$
\beta_{i}^S\big(S/I_Z\big) \leq \beta_{i}^S\big(R/\CC(d,R)\big) 
$
for every  $Z\in\Hilb^d(X)$ and  every $i = 0,\ldots, n$.
\end{enumerate}
\end{prop}
\begin{proof}
It suffices to present the proof for (2).
As in the proof of Lemma \ref{LemmaAlmostLexMerminMurai},
we may assume that $x_{n+1}$ is a non-zerodivisor on $S/I_Z$, and we 
consider $\wI = \frac{I_Z+(x_{n+1})}{(x_{n+1})}\subseteq \wS$.
By assumption, both Conjectures \ref{ConjEGH} and \ref{ConjLPP} hold,
so,  
there  exists a lex ideal $\wL \subseteq \wR$ such that $\HF(\wS/\wI) =\HF(\wR/\wL) $
and $\beta_{i,j}^\wS(\wS/\wI) \leq \beta_{i,j}^\wS(\wR/\wL) $ for all $i,j\geq 0$.
The ideal $L= \wL R \subseteq R$ is  almost lex.
By Remark \ref{RemarkModuloXNplus1}, we have $\beta_{i,j}^S(S/I_Z) \leq \beta_{i,j}^S(R/L) $ and $\HF(R/L)=\HF(S/I_Z)$,
hence, we have $\mult(L)= \mult(I_Z)=d$.
The conclusion follows from  Theorem \ref{TheoremExtremalFiniteResolution}.
\end{proof}

The EGH Conjecture has been proved in several cases, cf. \cite{CDSS21,Gu21}.
Here we sample some of the possible applications of Proposition \ref{PropositionEGHLPP}.

\begin{example}
Let $X \subseteq \PP^7$ be a complete intersection of 5 quadrics,
and   $Z\subseteq X$  a finite subscheme of length 60.
The EGH Conjecture holds for $X$ by  \cite{GH20}.
We compute $\CC(60,R)\subseteq R = \Bbbk[x_1, \ldots, x_8]/(x_1^2, \ldots, x_5^2)$,
and deduce
$
\mu(Z) \leq 66$
 by Proposition \ref{PropositionEGHLPP}.
\end{example}

\begin{example}
Let $X \subseteq \PP^5$ be a complete intersection of 3 cubics,
and   $Z\subseteq X$  a finite subscheme of length 60.
The EGH Conjecture holds for $X$ by  \cite{CaDS20}.
We compute $\CC(60,R)\subseteq R = \Bbbk[x_1, \ldots, x_6]/(x_1^3, x_2^3, x_3^3)$,
and deduce
$
\mu(Z) \leq 59$
 by Proposition \ref{PropositionEGHLPP}.
\end{example}

On the other hand, Conjecture \ref{ConjLPP} is known in very few cases.
Using \cite[Main Theorem]{CaSa18} and Proposition \ref{PropositionEGHLPP},
 we obtain the following result.

\begin{cor}\label{CorollaryLPP}
Assume   $\ch(\Bbbk) = 0$. 
Let $X  \subseteq \PP^n$ be a complete intersection 
with degree sequence 
such that  $e_j > \sum_{h=1}^{j-1}(e_h-1)$ for  $j\geq 3$.
Then, 
$
\beta_{i}^S\big(S/I_Z\big) \leq \beta_{i}^S\big(R/\CC(d,R)\big) 
$
for all  $Z\in\Hilb^d(X)$ and   $i \geq 0$.
\end{cor}

\begin{example}
An elliptic quartic $C\subseteq \PP_{\mathbb{C}}^3$ is a complete intersection of 2 quadric surfaces. 
Every 0-dimensional scheme $Z$ lying on $C$ satisfies
$$
\beta_1^S(S/I_Z) \leq 6,\,
\beta_2^S(S/I_Z) \leq 9, \,
\beta_3^S(S/I_Z) \leq 4.
$$
To see this, let $R = \mathbb{C}[x_1,x_2,x_3,x_4]/(x_1^2,x_2^2)$.
The ideals $\CC(d,R)$, 
with $d \in \NN$, are 
$(x_1,x_2,x_3)$, $(x_1,x_2,x_3^2)$, $(x_1,x_2x_3,x_3^2),$ or 
$(x_1x_2x_3^\alpha, x_1x_3^{\alpha+1+\delta_1}, x_2x_3^{\alpha+1+\delta_2}, x_3^{\alpha+2+\delta_3})$
for some  $\alpha\in \NN$ and $0\leq \delta_1 \leq \delta_2 \leq \delta_3\leq 1$.
The claimed bounds follow  from Corollary \ref{CorollaryLPP},
by calculating the Betti numbers in all four cases.
\end{example}

\begin{example}
Let $X \subseteq \PP_{\mathbb{C}}^4$ be a complete intersection of 3 quadrics
and let $Z \subseteq X$  be a finite subscheme of length 60.
While Corollary \ref{CorollaryLPP} does not apply directly to the degree sequence $(2,2,2)$, 
it can still be used to provide upper bounds
that are sharper than the general ones valid for $\Hilb^{60}(\PP^4)$,
 arguing as in \cite[Example 4.3 and Remark 4.4]{CaSa18}.
In fact, any ideal containing $I_X$ also contains a regular sequence of degrees 
$e_1 = 2, e_2= 2, e_3 =3$.
Therefore, letting $R = \mathbb{C}[x_1,x_2,x_3,x_4,x_5]/(x_1^2,x_2^2, x_3^3)$
and determining $\CC(60,R)$, Corollary \ref{CorollaryLPP} yields
$$
\beta_1^S(S/I_Z) \leq 15,\,
\beta_2^S(S/I_Z) \leq 39, \,
\beta_3^S(S/I_Z) \leq 37, \,
\beta_4^S(S/I_Z) \leq 12.
$$
\end{example}

\subsection*{Acknowledgments}
The  authors would like to thank  Paolo Lella and Roberto Notari for pointing out an error in a previous version of this manuscript,
and Ritvik Ramkumar for some helpful conversations.
Computations with Macaulay2 \cite{M2} provided valuable insights during the preparation of this paper.

\end{document}